\theoremstyle{definition}
\numberwithin{equation}{section}
\newcommand{\dom}{\operatorname{dom}}
\newcommand{\comment}[1]{}
\newcommand{\PFA}{\mathrm{PFA}}
\newcommand{\ZFC}{\mathrm{ZFC}}
\newcommand{\range}{\mathrm{range}}
\theoremstyle{plain}
\newtheorem{thm}{Theorem}[section]
\newtheorem{lem}[thm]{Lemma}
\newtheorem{prop}[thm]{Proposition}
\newtheorem{cor}[thm]{Corollary}
\newtheorem{fact}[thm]{Fact}
\theoremstyle{definition}
\newtheorem{defn}[thm]{Definition}
\newtheorem{rmk}[thm]{Remark}
\begin{document}


\baselineskip=17pt


\title[Club Minimal Kurepa Trees]{A Minimal Kurepa Tree With respect to Club Embeddings}

\author[H. Lamei Ramandi]{Hossein Lamei Ramandi}
\address{Department of Mathematics\\ University of Toronto\\
Toronto, ON, Canada, M5S 3G3}
\email{hlramandi@gmail.com}

\date{}

\begin{abstract}
We will show it is consistent with $\mathrm{GCH}$ that there is a  minimal Kurepa tree with respect to 
club embeddings. That is, there is a Kurepa tree $T$ which club embeds in all of its Kurepa 
subtrees in the sense of \cite{club_isomorphic}. Moreover, the Kurepa tree we introduce has no Aronszajn 
subtree.
\end{abstract}

\subjclass[2010]{03E05}

\keywords{Kurepa tree, club embedding.}

\maketitle

\section{Introduction}
Similarity of $\omega_1$-trees with respect to closed unbounded subsets of $\omega_1$
was first considered by Abraham and Shelah.
\begin{thm} \cite{club_isomorphic}
$\PFA$ implies that every two normal Aronszajn trees are club isomorphic.
\end{thm}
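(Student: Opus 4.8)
The plan is to produce the club isomorphism by a single proper forcing and then feed it to $\PFA$. Before defining anything I would \emph{normalize}: passing to a club of levels I may assume that $T$ and $S$ are both Hausdorff (distinct nodes at limit levels have distinct predecessor sets), that every node has infinitely many immediate successors, and that every node has extensions arbitrarily high. This trivializes the combinatorics at successor levels and concentrates all the difficulty at limit levels, which is exactly where the phenomenon of \emph{club} (rather than full) isomorphism must come from — one should keep in mind that there provably exist pairs of non-isomorphic Aronszajn trees, so the forcing must genuinely be able to \emph{skip} levels, and any naive finite-approximation forcing that makes ``$t$ enters the domain'' dense for every $t$ would illegitimately build a full isomorphism and cannot be correct.

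Accordingly I would let $Q=Q(T,S)$ consist of isomorphisms $f_p\colon T\restriction a_p \to S\restriction a_p$, where $a_p\subseteq\omega_1$ is countable, closed, and has a maximum, and $f_p$ is level preserving (so $f_p[T_\alpha]=S_\alpha$ for $\alpha\in a_p$) and order preserving and reflecting; order $Q$ by $p\le q$ iff $f_q\subseteq f_p$ and $a_q=a_p\cap(\max a_q+1)$. This $Q$ is not $\sigma$-closed, precisely because at a limit one may be unable to add a new top level when the realized branches of the two trees fail to match — and that is the feature we want.

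The routine half is a density lemma: for each $\gamma<\omega_1$ the set $E_\gamma=\{p:\max a_p>\gamma\}$ is dense. Given $p$, I push $\max a_p$ upward; normalization lets me biject each new successor level over the previous one, and whenever I reach a limit level at which the realized branches of $T$ and $S$ do not match I simply \emph{omit} that level from $a_p$, so I never get stuck and can always climb past $\gamma$. A filter $G$ meeting all $E_\gamma$ therefore yields $F=\bigcup_{p\in G}f_p$, an isomorphism of $T\restriction C$ onto $S\restriction C$ where $C=\bigcup_{p\in G}a_p$ is unbounded; using that each $a_p$ is closed, a suitable further family of $\aleph_1$ dense sets arranges that $C$ contains a genuine club, which is all the final statement requires.

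The heart of the matter, and the step I expect to fight with, is \textbf{properness}. Fix a countable $M\prec H_\theta$ with $T,S,Q\in M$ and set $\delta=M\cap\omega_1$; given $p\in Q\cap M$ I must build an $(M,Q)$-generic $q\le p$, and I will moreover force $\delta\in a_q$ (it is this that feeds closure of $C$, since the relevant $\delta$'s form a club). Inside $M$ I construct a descending sequence $p=p_0\ge p_1\ge\cdots$ meeting all dense subsets of $Q$ in $M$ with $\sup_n\max a_{p_n}=\delta$, and set $h=\bigcup_n f_{p_n}$, an isomorphism between cofinal restrictions $T\restriction(c\cap\delta)$ and $S\restriction(c\cap\delta)$ with $c$ cofinal in $\delta$. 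The entire problem now reduces to \emph{one} step: extend $h$ by a level-$\delta$ map, i.e.\ find a bijection $T_\delta\to S_\delta$ carrying the branch below each $x\in T_\delta$ to the branch below its image. This is where being \emph{Aronszajn} is indispensable: $T_\delta$ and $S_\delta$ are countable, so only countably many branches of $T\restriction\delta$ and of $S\restriction\delta$ are realized at level $\delta$, and by building $h$ with a dovetailed back-and-forth I can try to guarantee that $h$ sends realized branches of $T$ exactly onto realized branches of $S$. Making this back-and-forth succeed — controlling, against every dense set of $M$ that I am simultaneously obliged to meet, precisely which branches become realized so that the two countable families of realized branches are matched bijectively — is the crux of the proof; everything else is bookkeeping, and with properness in hand $\PFA$ applied to the $\aleph_1$ dense sets above delivers the club isomorphism.
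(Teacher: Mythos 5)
This statement is quoted from Abraham--Shelah; the paper gives no proof of it, so the only internal point of comparison is the closely analogous forcing $\mathcal{Q}_{T,U}$ of Section 3. Measured against either, your proposal has the right skeleton (the right poset of countable closed level-sets, the right density lemma, the right place to look for trouble) but a genuine gap precisely at the step you yourself flag: properness is never proved, and the poset as you define it is too bare to support the argument you sketch. Having built $h=\bigcup_n f_{p_n}$ inside $M$, you must extend it to level $\delta=M\cap\omega_1$, i.e.\ for each $x\in T_\delta$ the chain $h[\{z<x\}]$ must be bounded in $S_\delta$ and the induced map must be onto $S_\delta$. The nodes of $T_\delta$ and $S_\delta$ are not in $M$; you can only aim at them through their initial segments. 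But when you meet a dense set $D\in M$ you must pass to some $q\in D\cap M$, and nothing in your ordering obliges that extension to keep $f_q(x\upharpoonright\max a_q)$ below any intended target in $S_\delta$: the ``aiming'' commitments are not part of the conditions, so the dense sets you are required to meet are free to destroy them. The sentence ``by building $h$ with a dovetailed back-and-forth I can try to guarantee\dots'' is exactly the unproved assertion, and it is not bookkeeping --- it is the theorem.

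The known repair is to build the commitments into the forcing: a condition carries, besides $f_p$, a set of promises (pairs of nodes or branches that every extension must continue to match), so that meeting a dense set automatically respects the aiming; one must then prove an amalgamation lemma saying that adding a promise and entering a dense set of $M$ are compatible, and this is where the Aronszajn hypothesis does real work beyond the countability of $T_\delta$. This is visibly the mechanism in the paper's own $\mathcal{Q}_{T,U}$: the component $\phi_p$ and clause (4) ($f_p(b_\xi(\alpha_p))=b_{\phi_p(\xi)}(\alpha_p)$) exist solely so that at the properness/completeness step the level-$\delta$ map can be \emph{defined} by $f_p(b_\xi(\delta))=b_{\phi_p(\xi)}(\delta)$, with stationarity of $\Omega(T)$ guaranteeing that the promises recorded in $M$ cover all of $T_\delta$. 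Your proposal needs the analogous apparatus (and the accompanying lemma) before it is a proof.
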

Here two $\omega_1$-trees $S,T$ are \emph{club isomorphic}
if there is a club $C\subset \omega_1$ such that
$T\upharpoonright C$ is isomorphic to $S\upharpoonright C$.
This theorem may be regarded as an evidence that under some reasonable forcing axioms,
 Aronszajn trees behave like  non-atomic countable trees. 
Although there is such an insightful theorem regarding the 
club isomorphisms of Aronszajn trees, similar questions regarding
Kurepa trees do not seem to be addressed in the literature.
For instance, considering the 
fact that $2^{<\omega}$ is a minimal 
countable non-atomic tree, one might ask whether or not there can be Kurepa trees which are 
minimal with respect to club emebeddings. In this paper we will prove:
\begin{thm} \label{main}
It is consistent with $\mathrm{GCH}$ that there is a Kurepa tree $T$ which is club isomorphic to 
all of its downward closed everywhere Kurepa subtrees. Moreover, $T$ has no Aronszajn 
subtrees.
\end{thm}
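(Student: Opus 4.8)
The plan is to carry out a direct transfinite construction in a model of $\diamondsuit^+$ (for instance in $L$, where $GCH$ holds), building $T$ as a normal, everywhere--splitting subtree of ${}^{<\omega_1}\omega$ by recursion on its levels $T_\alpha$. I would fix a $\diamondsuit^+$--sequence $\Seq{\mathcal{A}_\alpha : \alpha<\omega_1}$, so that each $\mathcal{A}_\alpha$ is a countable family of subsets of $\alpha$ and every $A\subseteq\omega_1$ is guessed along a club $C_A$ in the strong sense that $A\cap\alpha,\ C_A\cap\alpha\in\mathcal{A}_\alpha$ for all $\alpha\in C_A$. This sequence plays two roles. First, it is used in the usual way to seed $\aleph_2$ cofinal branches, so that $T$ is Kurepa; under $GCH$ this is exactly $2^{\aleph_1}=\aleph_2$ branches, and since the construction takes place inside $L$, $GCH$ is preserved. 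Second, at each limit level $\alpha$ I would read off from $\mathcal{A}_\alpha$ the guesses for the downward closed subtrees $S\subseteq T\upharpoonright\alpha$ that might later grow into everywhere Kurepa subtrees, and decide which cofinal branches of $T\upharpoonright\alpha$ to cap by a node of $T_\alpha$ so as to keep $T$ compatible with a future isomorphism onto each such $S$.

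The heart of the self-similarity is two structural invariants maintained throughout: $T$ is \emph{limit-complete}, meaning every node extends to every higher level and every cofinal branch of $T\upharpoonright\alpha$ designated by the bookkeeping is realized by a unique node of $T_\alpha$; and $T$ is \emph{self-similar}, meaning the cone above any node is isomorphic, via a level--shifting map, to $T$ itself. Given an everywhere Kurepa downward closed subtree $S$, coded by $A\subseteq\omega_1$, I would build a club isomorphism $\varphi_S\colon T\upharpoonright C\to S\upharpoonright C$ by recursion along the $\diamondsuit^+$--club $C_A$. At successor stages, self-similarity together with the hypothesis that every node of $S$ lies below $\aleph_2$ branches of $S$ lets me extend $\varphi_S$ one level, matching each node of $T$ to a node of $S$ with an isomorphic cone. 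At a limit point $\alpha\in C_A$, $\diamondsuit^+$--correctness places $S\cap(T\upharpoonright\alpha)$ and the already--built $\varphi_S\upharpoonright\alpha$ inside $\mathcal{A}_\alpha$, and limit-completeness guarantees that each branch of $T\upharpoonright\alpha$ capped in $T_\alpha$ corresponds, under $\varphi_S\upharpoonright\alpha$, to a branch of $S\upharpoonright\alpha$ capped in $S_\alpha$; this is exactly what is needed to push $\varphi_S$ through the limit. Thinning $C_A$ to the limit points at which this succeeds yields the desired club $C$.

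The no-Aronszajn-subtree clause I would derive from limit-completeness by an argument parallel to the isomorphism construction: if $U\subseteq T$ is a downward closed subtree of height $\omega_1$, then along the $\diamondsuit^+$--club for a code of $U$ the restrictions $U\upharpoonright\alpha$ are guessed, and limit-completeness is arranged so that at these limits $U$ is forced to realize a node capping one of its own branches cofinal in $\alpha$; threading these through the club produces a cofinal branch of $U$, so $U$ is not Aronszajn. The details here mirror the limit step of the main recursion.

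I expect the main obstacle to be the simultaneous calibration of the limit levels. To keep $T$ Kurepa and free of Aronszajn subtrees, the limit levels must realize \emph{many} branches; but to make every everywhere Kurepa subtree club isomorphic to $T$, the set of realized branches must be \emph{canonical} enough that each such $S$ realizes, on a club, exactly a $\varphi_S$--matching set. Reconciling this richness with this rigidity --- and verifying that a single round of bookkeeping can serve all potential $S$ at once while both structural invariants are preserved at every limit --- is the crux, and is precisely where the club feature of $\diamondsuit^+$ and the self-similar recursion are indispensable.
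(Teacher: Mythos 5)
Your approach is genuinely different from the paper's --- the paper obtains $T$ by a countably closed forcing and then adds the club isomorphisms one at a time via a countable support iteration of $\Omega(T)$-complete posets --- but there is a gap at exactly the point you yourself flag as the crux, and I do not see how a one-shot $\diamondsuit^+$ recursion can close it. The problem is the limit step of the construction of $\varphi_S$. At a limit point $\alpha$ of the club, $\varphi_S\upharpoonright\alpha$ assigns to each $t\in T_\alpha$ a cofinal branch of $S\upharpoonright\alpha$ (the image of the branch of $T\upharpoonright\alpha$ below $t$), and you need that branch to be capped by a node of $S\cap T_\alpha$. But $S\cap T_\alpha$ is not under the control of the recursion that builds $T$: a downward closed everywhere Kurepa subtree is given by an arbitrary choice, at every level, of which nodes of $T$ to keep (for instance the downward closure of an arbitrary $\aleph_2$-sized family of branches of $T$), and no structural invariant of $T$ alone --- ``limit-completeness'' and ``self-similarity'' included --- can force the $\varphi_S$-image of a capped branch to be capped \emph{inside} $S$ at level $\alpha$. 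Moreover ``everywhere Kurepa'' is not determined by initial segments, so at stage $\alpha$ the bookkeeping cannot even tell which guessed candidates it must serve; hedging for all of them imposes conflicting demands on which branches of $T\upharpoonright\alpha$ are realized in $T_\alpha$.

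The paper's solution is to make the isomorphism itself generic: a condition of $\mathcal{Q}_{T,U}$ carries, besides the partial level-preserving isomorphism $f_p$, a partial matching $\phi_p$ of branches of $T$ to branches of $U$ subject to $f_p(b_\xi(\alpha_p))=b_{\phi_p(\xi)}(\alpha_p)$, and a lower bound for a generic sequence over a model $M$ is completed at $\delta=M\cap\omega_1$ by setting $f(b_\xi(\delta))=b_{\phi(\xi)}(\delta)$; this works only because $M\cap\mathcal{B}(T)\in\Omega(T)$, i.e.\ every node of $T_\delta$ lies on a branch enumerated in $M$, which is why the stationarity of $\Omega(T)$ and the whole $S$-completeness apparatus are needed (the forcing is not claimed to be proper, a sign that the combinatorics genuinely resists a naive recursion). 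Your proposal has no analogue of the branch matching $\phi$, and that is the missing idea. Separately, even producing a Kurepa tree with no Aronszajn subtree is nontrivial --- the paper forces one with $\mathcal{K}$ and cites the Kuzeljevic--Todorcevic construction --- so deriving that clause from $\diamondsuit^+$ would need its own argument rather than the remark that it ``mirrors the limit step.''
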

An $\omega_1$-tree $T$ is said to be \emph{everywhere Kurepa} if for all
$x \in T$, the tree of all $y \in  T$ that are comparable with $x$ is Kurepa. Since every Kurepa tree 
contains an everywhere
Kurepa subtree, this theorem implies that the tree in the theorem is actually club minimal 
with respect to being Kurepa.  In other words, for every downward closed  Kurepa subtree $U\subset T$
there is a club $C\subset \omega_1$ and a one to one  function
$f:T\upharpoonright C \longrightarrow U\upharpoonright C$ which is level and order preserving.

The forcings we use to add embeddings are not proved to be proper, but their behavior towards
suitable models $M$ are similar to proper posets often enough.
This property of posets is called $\mathcal{E}$-completeness, and was shown to be sufficient criterion
for preserving $\omega_1$ in \cite{proper_forcing}. The notion $S$-completeness here seems to coincide with 
$\mathcal{E}$-completeness.

In section
\ref{Iteration section}, based on the work in \cite{proper_forcing}, and the notion of the proper isomorphism 
condition for proper posets we will prove the lemmas needed for certain chain conditions which are not 
included in \cite{proper_forcing}. We have also included the proof of the fact that $S$-completeness 
is preserved by countable support iterations although it is proved in \cite{proper_forcing}. 
This makes the proof of the lemmas needed for chain condition properties more clear. 
Section \ref{embedding forcing} is devoted to the proof of Theorem \ref{main}.

In this paper all trees are considered to have the property that if $s,t$ are two distinct elements of the same limit height, 
they have different sets of predecessors.
An \emph{$\omega_1$-tree} is a tree which has height $\omega_1$ and countable levels.
A chain $b \subset T$
is called a \emph{cofinal branch} of $T$ if it intersects all levels of $T$.
An $\omega_1$-tree $T$ is called \emph{Aronszajn} if it has no cofinal branches.
It is called \emph{Kurepa} if it has at least $\omega_2$ many cofinal branches.
For $C\subset \omega_1$, $T\upharpoonright C=\{t\in T:$ height of $t$ is in $C\}$.
If $S,T$ are trees, $f:T \longrightarrow S$ is called a tree embedding if for all $t,s\in T, $
$t<_T s$ iff $f(t)<_S f(s)$.
Here $\mathcal{B}(T)$ is the collection 
of all cofinal branches in $T$. 
For $b \in \mathcal{B}(T)$, $b(\alpha)$
is the element in $b$ which has
height $\alpha$, and  for $b,b'$ in $\mathcal{B}(T)$, 
$b\Delta b'$ is the minimum $\alpha \in \omega_1$ such that 
$b(\alpha)\neq b'(\alpha).$
For $Z \subset \mathcal{B}(T)$, $\alpha_Z = \sup\{b\Delta b':b,b'\in Z\}$.
By $\Omega(T)$ we mean the set of all countable $Z\subset \mathcal{B}(T)$ with the property that 
for all $t \in T_{\alpha_Z}$ there is a $b \in Z$ with $t \in b$.

\section{$S$-Completeness, Iteration and Chain Condition} \label{Iteration section}

We will work with forcings which may not be proper but up to a fixed stationary set 
they behave very much like $\sigma$-complete forcings.
In this section we provide the machinery to iterate these posets without collapsing cardinals. 
Everything in this section is built on the material in \cite{proper_forcing}. 

For a regular cardinal $\theta$, $H_\theta$ is the collection of all sets of hereditary cardinality less than $\theta$.
We assume $H_\theta$ is equipped with a fixed well ordering without mentioning it.
Assume $\mathcal{P}$ is a forcing and $\theta$ is a regular cardinal such that $\mathcal{P}$ and the powerset
of $\mathcal{P}$ are in $H_\theta$. A countable elementary submodel $N$ of $H_\theta$ is said to be \emph{suitable} for $\mathcal{P}$
if $\mathcal{P} \in N$. A decreasing sequence $\langle p_n : n \in \omega \rangle$ of elements of $\mathcal{P}\cap N$ 
is said to be \emph{$(N,\mathcal{P})$-generic} if 
for all dense subsets $D$ of $\mathcal{P}$ that are in $N$ there is an $n \in \omega$ such that $p_n \in D$.
\begin{defn}
Assume $X$ is uncountable and $S \subset [X]^\omega$ is stationary. A poset $\mathcal{P}$ is 
said to be \emph{$S$-complete}
if every descending $(M, \mathcal{P})$-generic
sequence $\langle p_n: n\in \omega \rangle$ has a lower bound, for all $M$ with $M \cap X \in S$ and
$M$ suitable for $X,\mathcal{P}$.
\end{defn} 
First note that $S$-complete forcings preserve the stationarity of all stationary
subsets of $S$. Although it is clear from the definition, we emphasize that $S$-completeness is not
stronger than properness unless $S$ contains a club. If $S$ contains a club, $S$-completeness is 
very close to being $\sigma$-closed. For instance, the forcing axiom for $S$-complete forcings is a theorem of $\ZFC$,
when $S$ contains a club in $[X]^\omega$ for some uncountable  set $X.$
The following fact vacuously follows from the definition.
\begin{fact}
 Assume $X$ is uncountable and $S\subset [X]^\omega$ is stationary. If $\mathcal{P}$ is an $S$-complete
  forcing then it preserves $\omega_1$ and adds no new countable sequences of ordinals.
\end{fact}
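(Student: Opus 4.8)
The plan is to reduce both conclusions to the single assertion that $\mathcal{P}$ adds no new $\omega$-sequence of ordinals: a collapse of $\omega_1$ would in particular produce a new surjection from $\omega$ onto $(\omega_1)^V$, which is such a sequence, so preservation of $\omega_1$ follows once the no-new-sequences statement is established. Accordingly, I would fix a $\mathcal{P}$-name $\dot f$ and a condition $p$ with $p \Vdash \dot f \colon \omega \to \Ord$, and aim to produce $q \le p$ forcing $\dot f$ to coincide with a function already in the ground model; density of such $q$ then gives $\Vdash \dot f \in V$.

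First I would use the stationarity of $S$ to secure a witnessing model. For a sufficiently large regular $\theta$, the collection of traces $M \cap X$, as $M$ ranges over countable elementary submodels of $H_\theta$ containing the relevant parameters, contains a club of $[X]^\omega$; intersecting this club with $S$ yields a model $M$ suitable for $X,\mathcal{P}$ with $\mathcal{P}, \dot f, p \in M$ and $M \cap X \in S$. Next I would construct a descending $(M,\mathcal{P})$-generic sequence $\Seq{p_n : n \in \omega}$ with $p_0 \le p$: enumerating the countably many dense subsets of $\mathcal{P}$ lying in $M$ as $\Seq{D_n : n \in \omega}$, at each stage elementarity supplies $p_{n+1} \le p_n$ with $p_{n+1} \in D_n \cap M$. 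Since $M \cap X \in S$ and $M$ is suitable, $S$-completeness furnishes a lower bound $q$ for this sequence.

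I would then argue that $q$ decides $\dot f$ entirely into $M$. For each $k$ the set $D = \{\, r : r \text{ decides } \dot f(k) \,\}$ is dense and lies in $M$, hence equals some $D_n$, so $p_{n+1} \in D \cap M$ decides $\dot f(k)$; by elementarity the decided value $\alpha_k$ belongs to $M$, and as $q \le p_{n+1}$ we obtain $q \Vdash \dot f(k) = \check\alpha_k$. The sequence $g = \Seq{\alpha_k : k \in \omega}$ is a genuine element of $V$ and $q \Vdash \dot f = \check g$. Because $p$ and $\dot f$ were arbitrary, the conditions forcing $\dot f$ into $V$ are dense, so $\mathcal{P}$ adds no new $\omega$-sequence of ordinals and, by the opening remark, preserves $\omega_1$.

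The only genuinely non-formal step is securing the model $M$ with $M \cap X \in S$; everything afterward is the routine genericity-plus-elementarity argument familiar from proper forcing, which is presumably why the authors call the fact essentially immediate. The main point to get right is that the notion of \emph{suitable for} $X,\mathcal{P}$ is calibrated to serve both the club-projection argument (so that suitable $M$ with prescribed parameters and $M \cap X \in S$ exist) and the hypothesis of $S$-completeness, so that the lower bound $q$ is actually provided.
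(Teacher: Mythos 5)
Your proof is correct and is exactly the routine argument the paper has in mind when it says the fact ``vacuously follows from the definition'': meet a suitable $M$ with $M\cap X\in S$ (using that the traces of countable elementary submodels contain a club of $[X]^\omega$, hence meet the stationary set $S$), build a descending $(M,\mathcal{P})$-generic sequence below $p$ hitting the deciding dense sets, and use the $S$-complete lower bound to read off $\dot f$ in the ground model, with preservation of $\omega_1$ as an immediate consequence. The paper offers no written proof, so there is nothing to compare beyond noting that your write-up supplies the standard details correctly.
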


Now we prove that for a given stationary $S\subset [X]^\omega$ where $X$ is uncountable,
the property of being  $S$-complete is preserved by countable support iterations. 
We follow the same strategy 
as in the proof of the similar lemma for proper posets in \cite{proper_forcing}.

\begin{fact} \label{comp-suc}
Assume $S, X$ are as above, $\mathcal{P}$ is $S$-complete, and 
$\Vdash_{\mathcal{P}}``\dot{\mathcal{Q}}$ is $\check{S}$-complete''.
Then $\mathcal{P}*\dot{\mathcal{Q}}$ is $S$-complete.
\end{fact}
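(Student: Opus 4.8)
The plan is to prove that the two-step iteration $\mathcal{P}*\dot{\mathcal{Q}}$ is $S$-complete by taking an arbitrary suitable model $M$ with $M\cap X\in S$ and an arbitrary descending $(M,\mathcal{P}*\dot{\mathcal{Q}})$-generic sequence $\langle (p_n,\dot q_n):n\in\omega\rangle$, and producing a lower bound. The natural strategy, following the proper-forcing template in \cite{proper_forcing}, is to split the generic sequence into its two coordinates and handle them one factor at a time. First I would observe that the first coordinates $\langle p_n:n\in\omega\rangle$ form a descending $(M,\mathcal{P})$-generic sequence; since $\mathcal{P}$ is $S$-complete and $M\cap X\in S$, this sequence has a lower bound $p\in\mathcal{P}$. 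The work is then to arrange that $p$ forces the sequence of names $\langle \dot q_n:n\in\omega\rangle$ to be a descending $(M[\dot G],\dot{\mathcal{Q}})$-generic sequence, so that the hypothesis $\Vdash_{\mathcal{P}}``\dot{\mathcal{Q}}$ is $\check S$-complete'' can be invoked in the extension to yield a lower bound $\dot q$ for the second coordinates.

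The key steps, in order, are as follows. First I would verify the bookkeeping that lets the genericity of the pair translate into genericity of each coordinate: if $\langle (p_n,\dot q_n)\rangle$ meets every dense subset of $\mathcal{P}*\dot{\mathcal{Q}}$ lying in $M$, then $\langle p_n\rangle$ meets every dense subset of $\mathcal{P}$ in $M$, and $\mathcal{P}$ being $S$-complete gives the lower bound $p$. Second, and this is the crux, I would show that below $p$ the model $M[\dot G]$ is itself suitable (where $\dot G$ is the canonical name for the generic on $\mathcal{P}$) with $M[\dot G]\cap X=M\cap X\in S$ — the latter equality holding because an $S$-complete $\mathcal{P}$ adds no new countable sequences of ordinals, so $M[\dot G]\cap X$ cannot grow — and that $p$ forces $\langle \dot q_n\rangle$ to be $(M[\dot G],\dot{\mathcal{Q}})$-generic and descending. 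This last point uses the standard factoring of dense sets: a dense subset $D\in M[\dot G]$ of $\dot{\mathcal{Q}}$ arises from a name $\dot D\in M$, and the genericity of the pairs forces $\langle \dot q_n\rangle$ to meet $\dot D$. Third, applying $S$-completeness of $\dot{\mathcal{Q}}$ in the $\mathcal{P}$-extension yields a name $\dot q$ with $p\Vdash ``\dot q$ is a lower bound of $\langle \dot q_n\rangle$''; then $(p,\dot q)$ is a lower bound for the original sequence.

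The main obstacle I expect is the second step: checking that $M[\dot G]\cap X\in S$ and that suitability is preserved into the generic extension. The inclusion $M\cap X\subseteq M[\dot G]\cap X$ is trivial, but the reverse inclusion is exactly where $S$-completeness of $\mathcal{P}$ is essential, since it guarantees no new countable subsets of $X$ enter $M[\dot G]$ that were not already in $M$; without this one could not conclude $M[\dot G]\cap X\in S$ and the hypothesis on $\dot{\mathcal{Q}}$ would not apply. One should also confirm that ``suitable for $X,\dot{\mathcal{Q}}$'' is the right relativized notion in the extension and that the elementary submodel $M[\dot G]$ of the relevant $H(\theta)^{V[\dot G]}$ is legitimately suitable there. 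Once these model-theoretic points are settled, the combination of the two lower bounds is routine and produces the desired lower bound for $\mathcal{P}*\dot{\mathcal{Q}}$, completing the argument.
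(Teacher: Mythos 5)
Your proposal follows essentially the same route as the paper: take a lower bound $p$ of the first coordinates, note that $p$ forces $\langle \dot q_n : n\in\omega\rangle$ to be a descending $(M[\dot G],\dot{\mathcal{Q}})$-generic sequence with $M[\dot G]\cap X\in S$, and apply the forced $S$-completeness of $\dot{\mathcal{Q}}$ to obtain a name $\dot q$ so that $p*\dot q$ is the desired lower bound. The one adjustment: the equality $M[\dot G]\cap X = M\cap X$ is justified by the fact that $p$, being a lower bound of an $(M,\mathcal{P})$-generic sequence, is an $(M,\mathcal{P})$-generic condition and hence forces $M[\dot G]\cap \mathbf{V}=M$ (which is how the paper argues), rather than by the absence of new countable sequences of ordinals, since that alone would not rule out old elements of $X$ entering $M[\dot G]$.
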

\begin{proof}
Assume $M$ is suitable for $\mathcal{P}*\dot{\mathcal{Q}}$ and $M\cap X \in S$. Let 
$\langle p_n*\dot{q}_n : n \in \omega \rangle$ be a descending $(M, \mathcal{P}*\dot{\mathcal{Q}})$-generic 
sequence.
Since $\langle p_n : n \in \omega \rangle$ is an $(M, \mathcal{P})$-generic sequence, it has a lower bound 
$p \in \mathcal{P}$. Moreover 
\begin{center}
$p\Vdash_{\mathcal{P}}$ 
``$\langle \dot{q}_n : n \in \omega \rangle$ is an $(M[\dot{G_{\mathcal{P}}}], \dot{\mathcal{Q}})$-generic sequence.''
\end{center} 
On the other hand, the $(M,\mathcal{P})$-generic condition $p$ forces that $M[\dot{G_{\mathcal{P}}}]\cap 
\mathbf{V}=M$
and consequently $M[\dot{G_{\mathcal{P}}}]\cap \check{X} \in \check{S}$.
So it forces that the sequence $\langle \dot{q}_n : n \in \omega \rangle$ has a lower bound as well.
 Let $\dot{q}$ be 
a $\mathcal{P}$-name for such a condition, then $p*\dot{q}$ is a lower bound for
 $\langle p_n*\dot{q}_n : n \in \omega \rangle$.
\end{proof}

\begin{lem} 
Assume $X$ is uncountable, $S\subset [X]^\omega$ is stationary, 
$\langle \mathcal{P}_i, \dot{\mathcal{Q}}_j: i\leq \delta, j < \delta \rangle$ is a countable support 
iteration of 
$S$-complete forcings, $N$ is suitable for $\mathcal{P}_\delta$, $N\cap X \in S$,
$\langle p_n: n\in \omega \rangle$ is an $(N,\mathcal{P}_\delta)$-generic descending sequence of conditions,
$\alpha < \delta$ is in $N$ and
$q\in \mathcal{P}_\alpha$ is a lower bound for $\langle p_n\upharpoonright \alpha: n \in \omega \rangle.$
Then there is a  lower bound $q'\in \mathcal{P}_\delta$  for  $\langle p_n: n\in \omega \rangle$, such that
$q'\upharpoonright \alpha = q.$
\end{lem}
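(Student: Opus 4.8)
The plan is to prove the lemma by induction on $\delta$, establishing the slightly stronger statement that permits $\alpha \leq \delta$ (the case $\alpha = \delta$ being trivial, with $q' = q$). The auxiliary parameters $\alpha$ and $q$ are exactly what allow the induction to close up at limit stages. Throughout I would use the standard fact that restriction preserves genericity: if $\gamma \in N$ with $\gamma \leq \delta$, then $\langle p_n \upharpoonright \gamma : n \in \omega \rangle$ is an $(N, \mathcal{P}_\gamma)$-generic sequence, because any dense $D \in N$ of $\mathcal{P}_\gamma$ lifts to the dense set $\{\, p \in \mathcal{P}_\delta : p \upharpoonright \gamma \in D \,\} \in N$, which $\langle p_n \rangle$ must meet. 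I would also use that each $p_n$ belongs to $N$, so that its support, being a countable set lying in $N$, is a subset of $N$ (as witnessed by a surjection from $\omega$ in $N$) and hence is contained in $N \cap \delta$.

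For the successor step $\delta = \beta + 1$, I would first apply the induction hypothesis at $\beta$ to obtain a lower bound $r \in \mathcal{P}_\beta$ for $\langle p_n \upharpoonright \beta : n \in \omega\rangle$ with $r \upharpoonright \alpha = q$. Writing $p_n = (p_n \upharpoonright \beta) \concat \dot{q}_n$, the condition $r$ forces $\langle \dot{q}_n : n \in \omega\rangle$ to be an $(N[\dot{G}_\beta], \dot{\mathcal{Q}}_\beta)$-generic descending sequence. Exactly as in Fact \ref{comp-suc}, the point is that $r$, being $(N, \mathcal{P}_\beta)$-generic, forces $N[\dot{G}_\beta] \cap \mathbf{V} = N$, so that $N[\dot{G}_\beta] \cap \check{X} = N \cap X \in \check{S}$; invoking the forced $\check{S}$-completeness of $\dot{\mathcal{Q}}_\beta$ then yields a name $\dot{q}_\beta$ for a lower bound of $\langle \dot{q}_n \rangle$, and $q' = r \concat \dot{q}_\beta$ works, since $q' \upharpoonright \alpha = r \upharpoonright \alpha = q$.

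For a limit $\delta$, since the countable $N \prec H(\theta)$ contains no maximum of $N \cap \delta$, the ordinal $\delta^* := \sup(N \cap \delta)$ has cofinality $\omega$, and I would fix an increasing sequence $\langle \alpha_k : k \in \omega \rangle$ in $N \cap \delta$, cofinal in $\delta^*$, with $\alpha_0 = \alpha$. I would then build a chain of lower bounds recursively: set $q_0 = q$, and given $q_k \in \mathcal{P}_{\alpha_k}$ a lower bound for $\langle p_n \upharpoonright \alpha_k : n \in \omega\rangle$, apply the induction hypothesis to $\mathcal{P}_{\alpha_{k+1}}$, with parameters $\alpha_k$, $q_k$ and the generic sequence $\langle p_n \upharpoonright \alpha_{k+1} : n \in \omega\rangle$, to obtain a lower bound $q_{k+1} \in \mathcal{P}_{\alpha_{k+1}}$ for $\langle p_n \upharpoonright \alpha_{k+1}\rangle$ with $q_{k+1} \upharpoonright \alpha_k = q_k$. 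Finally I would define $q'$ by $q' \upharpoonright \alpha_k = q_k$ for every $k$ and $q'(\beta) = \mathbb{1}$ (the trivial condition) for $\beta \geq \delta^*$. Its support lies in $\bigcup_k \operatorname{supp}(q_k) \subseteq \delta^*$, a countable set, so $q' \in \mathcal{P}_\delta$ and $q' \upharpoonright \alpha = q$.

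It would remain to check $q' \leq p_n$ for every $n$. Since $\operatorname{supp}(p_n) \subseteq N \cap \delta \subseteq \delta^*$, the coordinate $p_n(\beta)$ is trivial for $\beta \geq \delta^*$, so those coordinates impose nothing; and for $\beta < \delta^*$ I would pick $k$ with $\beta < \alpha_k$ and use that $q' \upharpoonright \beta = q_k \upharpoonright \beta \Vdash q'(\beta) = q_k(\beta) \leq p_n(\beta)$, which holds because $q_k \leq p_n \upharpoonright \alpha_k$. This delivers the desired lower bound. I expect the main obstacle to be the limit case: not the final verification, which is routine once the pieces are assembled, but arranging the inductive statement with the parameters $\alpha$ and $q$ so that the recursion yields a coherent chain $\langle q_k \rangle$ whose union has countable support and lies below every $p_n$. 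The other delicate point is the model preservation at successors — that $r$ forces $N[\dot{G}_\beta] \cap \mathbf{V} = N$ — since this is precisely what keeps the trace $N \cap X$ inside $S$ and so permits $S$-completeness to be applied to the next factor.
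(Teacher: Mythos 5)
Your proposal is correct and follows essentially the same route as the paper's proof: induction on $\delta$, with the successor step handled by the two-step argument of Fact \ref{comp-suc} and the limit step by building a coherent chain of lower bounds along a cofinal sequence in $N\cap\delta$ starting at $\alpha$ and taking the union. You simply supply more detail (restriction preserves genericity, supports lie in $N$, the final verification that $q'\leq p_n$) than the paper records.
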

\begin{proof}
We use induction on $\delta$. If $\delta$ is a successor ordinal the lemma 
follows from the induction hypothesis and the argument in the proof of the 
previous fact.  If $\delta$ is limit, let $\langle \alpha_n : n \in \omega \rangle$ be a cofinal 
sequence in $N \cap \delta$ such that $\alpha_0=\alpha$, and for all $i$, $\alpha_i \in N$.
Note that for all $i$, $\langle p_n\upharpoonright \alpha_i: n\in \omega \rangle$
is a descending $(N,\mathcal{P}_{\alpha_i})$-generic sequence.
So by the induction hypothesis there is a sequence $q_i$, $i \in \omega$, such that
\begin{itemize}
\item
$q_0=q$,
\item
$q_i \in \mathcal{P}_{\alpha_i}$ is a lower bound for $\langle p_n\upharpoonright \alpha_i: n\in \omega \rangle$, and 
\item
if $i<j $ then $ q_j\upharpoonright \alpha_i = q_i$.
\end{itemize}
Now $q'=\bigcup_{i\in \omega}q_i$ works.
\end{proof}
\begin{cor} \label{iteration}
Assume $X$ is uncountable and $S\subset[X]^\omega$ is stationary. Then $S$-completeness is 
preserved under countable support iterations.
\end{cor}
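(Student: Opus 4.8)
The plan is to reduce the Corollary directly to Lemma \ref{iteration}, which already contains all of the technical content. Unwinding definitions, to say that a countable support iteration $\langle \mathcal{P}_i, \dot{\mathcal{Q}}_j : i \le \delta, j < \delta\rangle$ of $S$-complete forcings is again $S$-complete is precisely to say that the final poset $\mathcal{P}_\delta$ is $S$-complete; that is, for every $M$ suitable for $\mathcal{P}_\delta$ with $M \cap X \in S$, every descending $(M, \mathcal{P}_\delta)$-generic sequence $\langle p_n : n \in \omega\rangle$ admits a lower bound in $\mathcal{P}_\delta$. So it suffices to produce such a lower bound.

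To do this I would invoke Lemma \ref{iteration} in the base case $\alpha = 0$ (the case $\delta = 0$ being trivial, so that $0 < \delta$). Since $\mathcal{P}_0$ is the trivial poset, its greatest element $q$ is vacuously a lower bound for the constant sequence $\langle p_n \upharpoonright 0 : n \in \omega\rangle$, and $0 \in M$. Thus all the hypotheses of Lemma \ref{iteration} are met with $N = M$ and with this choice of $\alpha$ and $q$, and the Lemma returns a lower bound $q' \in \mathcal{P}_\delta$ for $\langle p_n : n \in \omega\rangle$ (carrying the, for us irrelevant, extra information that $q' \upharpoonright 0 = q$). This is exactly what the definition of $S$-completeness of $\mathcal{P}_\delta$ demands, and it completes the argument.

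I expect essentially no genuine obstacle to remain at this stage, precisely because the delicate work has been front-loaded into the statement of Lemma \ref{iteration}. The stronger conclusion ``$q' \upharpoonright \alpha = q$'' there is not what the Corollary needs, but it is exactly the coherence hypothesis that allows the induction on $\delta$ in the Lemma to close: at successor stages one feeds the restricted lower bound into the two-step argument of Fact \ref{comp-suc}, and at limit stages one threads a coherent chain $\langle q_i : i \in \omega\rangle$ of lower bounds along a cofinal $\omega$-sequence in $N \cap \delta$ and takes its union. The only real difficulty --- ensuring these partial lower bounds cohere and that their union is again a condition of the iteration with the correct countable support --- lives entirely inside the proof of Lemma \ref{iteration}; for the Corollary it is enough to read off the case $\alpha = 0$.
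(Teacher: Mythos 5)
Your proof is correct and follows the paper's intended route: the corollary is stated as an immediate consequence of Lemma \ref{iteration}, and instantiating it with $\alpha=0$ and the trivial condition $q\in\mathcal{P}_0$ is exactly the reading the paper leaves implicit. Nothing further is needed.
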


Assume $T$ is an $\omega_1$-tree and $\mathcal{P}$ is a poset. 
We are interested in circumstances in which  $\mathcal{P}$
adds no new cofinal branch or no new Aronszajn subtree to $T$.
It is a well known fact that $\sigma$-closed forcings do not add new cofinal branches to $\omega_1$-trees. A similar argument shows that 
if $X$ is uncountable, $S \subset [X]^\omega$ is stationary and $\mathcal{P}$ is an $S$-complete forcing then it does not add 
cofinal branches to $\omega_1$-trees of the ground model. We include the proof of this fact for more clarity.
Note that  no forcing can add a new cofinal branch or Aronszajn subtree to $T$
when $T$ has no Aronszajn subtree and has only countably many cofinal branches. 

\begin{lem} \label{No A subtree} 
Assume $T$ is an $\omega_1$-tree 
which has uncountably many cofinal branches and which has no Aronszajn subtree 
in the ground model $\mathbf{V}$. Also assume
 $\Omega(T)\subset [\mathcal{B}(T)]^\omega$ is stationary  and
$\mathcal{P}$ is an $\Omega(T)$-complete forcing.
Then  $T$ has no Aronszajn subtree in $\mathbf{V}^\mathcal{P}$.
\end{lem}
\begin{proof}
Assume $\dot{U}$ is a $\mathcal{P}$-name for a downward closed Aronszajn subtree of $T$.
Let $p \in \mathcal{P}$, $M$ be suitable with $M\cap \mathcal{B}(T)\in \Omega(T)$, $\delta = M\cap \omega_1$, and 
$p, \dot{U}$ be in $ M.$ 
Note that if $b,b'$ are in $\mathcal{B}(T) \cap M$ then by elementarity $b \Delta b' \in \delta$.
Also by elementarity and the fact that $T$ has uncountably many cofinal branches and no Aronszajn subtree, 
for every $\xi \in \delta$ there are $b,b'$ in $\mathcal{B}(T) \cap M$
such that $b \Delta b' > \xi$. Therefore $\alpha_{\mathcal{B}(T) \cap M} = \delta$ and  for all $t \in T_\delta$ there exists 
$b \in M \cap \mathcal{B}(T) $ such that $b(\delta)=t$. Hence 
 $\{b(\delta): b \in M\cap \mathcal{B}(T)\}=T_\delta.$
 
For all $b \in M \cap \mathcal{B}(T)$ the set $D_b$ consisting of all 
conditions $q\in \mathcal{P}$ which forces that 
$b(\check{\alpha}) \notin \dot{U}$ for some $\alpha \in \omega_1$ 
is dense and in $M$. Note that if $q \in D_b \cap M$, 
then by elementarity there exists $\alpha \in \delta$ such that 
$q \Vdash b (\check{\alpha}) \notin \dot{U}$.
 Now let $\langle p_n: n\in \omega \rangle $ be a decreasing $(M,\mathcal{P})$-generic  sequence, with 
$p_0=p,$ and $\bar{p}$ be a lower bound for this sequence.
Then $\bar{p}$ forces that $\dot{U}$ has no element in $T_\delta.$
Since $U$ is downward closed, this implies that $U$ is a countable set which is a contradiction.
\end{proof}
 \begin{lem} \label{No New branch}
 Assume $T$ is an $\omega_1$-tree, $X$ is an uncountable set, 
$S \subset [X]^\omega$ is stationary, and 
$\mathcal{P}$ is an $S$-complete forcing. Then $\mathcal{P}$ does not add new cofinal branches to $T$.
 \end{lem}
\begin{proof}
Assume $\dot{b}$ is a $\mathcal{P}$-name and $p \in \mathcal{P}$ is a condition which 
forces that  $\dot{b}$ is a new cofinal branch of $T$.
Let $M$ be suitable for 
$\mathcal{P}$ with $M\cap X \in S$ and 
$\langle D_n : n \in \omega \rangle$ be an enumeration of the dense subsets of 
$\mathcal{P}$ that are in $M.$
Also let $\delta = M\cap \omega_1$.
Inductively choose $p_\sigma$ for each $\sigma \in 2^{< \omega}$ such that:
\begin{enumerate}
\item $p_0 \leq p$,
\item if $\sigma$ is an initial segment of $\pi$ then $p_\pi  \leq p_\sigma$,
\item \label{incomparable}
 if $\sigma$ and $\pi$ are incomparable then there are incomparable $s,t$ in $T \cap M$ such that 
$p_\sigma \Vdash \check{t} \in \dot{b}$ and $p_\pi \Vdash \check{s} \in \dot{b}$, and
\item $p_\sigma$ is in $M\cap D_{|\sigma|}$.
\end{enumerate}
In order to reach a contradiction, we show that $T_\delta$ has to be uncountable. Fix $f \in 2^\omega$.
Then $\langle p_{f \upharpoonright n} : n \in \omega \rangle$ is an $(M, \mathcal{P})$-generic sequence.
Note that  for each $\xi \in \delta$ there exists $n \in \omega$ and $t \in T_\xi$ 
such that $p_{f \upharpoonright n}   \Vdash \check{t} \in T_\xi$.
In other words, $f$ determines a cofinal branch through $T_{<\delta}$ uniquely.
Since $p$ forces that $\dot{b}$ is a cofinal branch in $T$ there is a unique $t_f \in T_\delta$ which is forced to be in $\dot{b}$ by any lower bound of 
$\langle p_{f \upharpoonright n} : n \in \omega \rangle$.
On the other hand by \ref{incomparable}, if $f,g$ in $2^{< \omega}$ are distinct then $t_f \neq t_g$. 
Therefore, $T_\delta$ is uncountable.
\end{proof}
Now we deal with the chain condition issue for $S$-complete forcings. The following definition 
is a modification of the $\kappa$-properness isomorphism condition.

\begin{defn} \label{S-cic}
Assume $S,X$ are as above and $\kappa$ is a regular cardinal. 
We say that $\mathcal{P}$ satisfies the \emph{$S$-closedness isomorphism condition for 
$\kappa$},
or $\mathcal{P}$ has the \emph{$S$-cic for $\kappa$},  if whenever
\begin{itemize}
\item
$M,N$ are suitable models for $\mathcal{P}$,
\item
both $M \cap X,$ $ N\cap X$ are in $S$,
\item
$h:M\rightarrow N$ is an isomorphism such that $h\upharpoonright (M\cap N) =\mathrm{ id}_{(M \cap N)}$,
\item
$\min((N \setminus M)\cap \kappa)> \sup(M\cap \kappa)$, and 
\item
$\langle p_n: n\in \omega \rangle $ is an $(M,\mathcal{P})$-generic sequence,
\end{itemize} 
then there is a common lower bound $q \in \mathcal{P}$ for
$\langle p_n: n\in \omega \rangle $ and $\langle h(p_n): n\in \omega \rangle $.
\end{defn}
\begin{lem} \label{chain}
Assume $2^{\aleph_0} < \kappa$, $\kappa$ is a regular cardinal and $S,X$ are as above. 
If $\mathcal{P}$ satisfies the $S$-cic for $\kappa$ then it has the
 $\kappa$-c.c.
\end{lem}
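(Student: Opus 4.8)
The plan is to argue by contradiction. Suppose $\mathcal{P}$ fails the $\kappa$-c.c., and fix an antichain $\{p_\alpha:\alpha<\kappa\}$; let $f$ be the parameter function $\alpha\mapsto p_\alpha$. Since $2^{\aleph_0}<\kappa$ and $\kappa$ is regular, $\kappa\ge\aleph_2$. The goal is to produce two suitable models $M,N$ and an isomorphism $h\colon M\to N$ meeting every hypothesis of the $S$-cic, arranged so that $h$ carries the index of an antichain element lying in $M$ to the index of a \emph{different} antichain element lying in $N$. Concretely, I will find $\nu_1\neq\nu_2$ and suitable models $M=N_{\nu_1}\ni p_{\nu_1}$, $N=N_{\nu_2}\ni p_{\nu_2}$ with $h(\nu_1)=\nu_2$; since $f\in M\cap N$ is fixed by $h$, this forces $h(p_{\nu_1})=p_{\nu_2}$. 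Building an $(M,\mathcal{P})$-generic descending sequence $\langle p_n:n\in\omega\rangle$ with $p_0\le p_{\nu_1}$, its $h$-image is an $(N,\mathcal{P})$-generic sequence with $h(p_0)\le p_{\nu_2}$, and the common lower bound supplied by the $S$-cic is then below both $p_{\nu_1}$ and $p_{\nu_2}$ --- contradicting that they are incompatible.

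To manufacture these models, fix a large regular $\theta$, a well-order $<^*$ of $H(\theta)$, and the structure $\mathfrak{A}=(H(\theta),\in,<^*,f,\mathcal{P},S,X,\kappa)$. First I build a continuous $\in$-increasing chain $\langle W_\nu:\nu<\kappa\rangle$ of elementary submodels of $\mathfrak{A}$ with $|W_\nu|<\kappa$, $W_\nu\cap\kappa\in\kappa$, and $\langle W_\mu:\mu\le\nu\rangle\in W_{\nu+1}$. On the club $C=\{\nu:W_\nu\cap\kappa=\nu\}$ one has $\nu\subseteq W_\nu$, so each $W_\nu$ (for $\nu\in C$) contains the entire initial segment of ordinals below $\nu$; this is the device that will later let distinct models share a genuine initial segment rather than merely an order type. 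Next, for each $\nu\in C$ I choose, using the stationarity of $S$ together with the standard fact that the traces on $X$ of countable elementary submodels containing a prescribed point form a club in $[X]^\omega$, a \emph{countable} $N_\nu\prec\mathfrak{A}$ with $W_\nu\in N_\nu$ and $N_\nu\cap X\in S$; then $\nu=W_\nu\cap\kappa\in N_\nu$, and hence $p_\nu=f(\nu)\in N_\nu$. Restricting $C$ to the stationary set of $\nu$ of uncountable cofinality guarantees that $N_\nu\cap\nu$ is bounded below $\nu$, so $N_\nu\cap\nu=N_\nu\cap(\sup(N_\nu\cap\nu)+1)$ is just the trace of $N_\nu$ on a proper initial segment.

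Now I thin this family to a set $I\subseteq\kappa$ of size $\kappa$ on which all relevant data are constant. An application of Fodor to $\nu\mapsto\sup(N_\nu\cap\nu)$ yields a stationary set and a fixed $\beta^*<\kappa$ with $N_\nu\cap\nu=N_\nu\cap(\beta^*+1)$ throughout. Using $2^{\aleph_0}<\kappa$, I freeze the isomorphism type: the transitive collapse $\pi_\nu\colon N_\nu\to\bar N_\nu$ lands in a countable transitive structure with $\pi_\nu(\nu)<\omega_1$, and there are only $\le 2^{\aleph_0}$ possibilities for $\bar N_\nu$ together with the collapsed parameters and $\pi_\nu(\nu)$; so on a set of size $\kappa$ these all agree. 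On that set $h:=\pi_{\nu_2}^{-1}\circ\pi_{\nu_1}\colon N_{\nu_1}\to N_{\nu_2}$ is an isomorphism with $h(\nu_1)=\nu_2$ fixing $f$, whence $h(p_{\nu_1})=p_{\nu_2}$. Finally, a closure-point argument --- taking the $N_\nu$ as Skolem hulls over a fixed countable $N^*\prec\mathfrak{A}$ with $N^*\cap X\in S$, and $\nu$ in the club of ordinals closed under the countably many functions coded in $N^*$ --- is used to force the actual coincidences $N_{\nu_1}\cap\nu_1=N_{\nu_2}\cap\nu_2$ and $h\upharpoonright(N_{\nu_1}\cap N_{\nu_2})=\mathrm{id}$, upgrading ``same collapse type'' to literal set-equality on the shared part. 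Shrinking once more so that $\sup(N_{\nu_1}\cap\kappa)<\nu_2$ for the chosen pair (possible since each such supremum is $<\kappa$ while $I$ is unbounded), every hypothesis of the $S$-cic for $\kappa$ holds with $\alpha_M=\nu_1$ and $\alpha_N=\nu_2$, and the contradiction of the first paragraph follows.

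The main obstacle is precisely that last coincidence. Freezing the collapse type via $2^{\aleph_0}<\kappa$ only delivers that $N_{\nu_1}\cap\nu_1$ and $N_{\nu_2}\cap\nu_2$ have the same \emph{order type}, and a naive count of their possible values as countable subsets of $\beta^*+1$ need not stay below $\kappa$; so genuine set-equality, and simultaneously $h\upharpoonright(N_{\nu_1}\cap N_{\nu_2})=\mathrm{id}$, must be engineered into the construction rather than extracted by pigeonhole. The continuous chain $\langle W_\nu\rangle$ (supplying the shared initial segment $\nu\subseteq W_\nu$) and the fixed base model $N^*$ are what make this possible, and checking that these choices really yield the exact equalities --- equivalently, that $\pi_{\nu_2}^{-1}\circ\pi_{\nu_1}$ restricts to the identity on the overlap --- is the technical heart of the proof.
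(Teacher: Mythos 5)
Your overall architecture is the same as the paper's: attach to each condition a countable suitable model with trace in $S$, press down on $\nu\mapsto\sup(N_\nu\cap\nu)$, use $2^{\aleph_0}<\kappa$ to stabilize the collapse type, and feed two of the resulting models into the $S$-cic. The one place where you deliberately deviate is the step you yourself flag as the technical heart, and that is exactly where the argument breaks. You propose to obtain the literal coincidences $N_{\nu_1}\cap\nu_1=N_{\nu_2}\cap\nu_2$ and $h\upharpoonright(N_{\nu_1}\cap N_{\nu_2})=\mathrm{id}$ by realizing each $N_\nu$ as a Skolem hull over a single fixed countable $N^*\prec\mathfrak{A}$ with $N^*\cap X\in S$, taking $\nu$ at closure points. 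But the $S$-cic also requires $N_\nu\cap X\in S$, and these two demands are in conflict: the hull of $N^*\cup\{W_\nu\}$ (or of $N^*\cup\{\nu\}$) properly extends $N^*$ --- it contains $\nu$, $p_\nu=f(\nu)$, and whatever elements of $X$ are definable from these together with parameters in $N^*$ --- so its trace on $X$ is in general a proper superset of $N^*\cap X$, and since $S$ is merely stationary there is no reason for that trace to land in $S$. If instead you re-choose $N_\nu$ by stationarity so that $N_\nu\cap X\in S$ while merely containing $N^*\cup\{W_\nu\}$, the model acquires elements (in particular ordinals below $\beta^*$) not controlled by $N^*$, and the closure-point argument for set-equality of the overlaps, and for $h$ being the identity there, no longer goes through. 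So the ``engineering'' that was supposed to replace the pigeonhole is not available as described.

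For what it is worth, your criticism of the naive count is a fair comment on the lemma in full generality, but it is not how the paper escapes the problem: the paper's proof chooses each $M_\xi$ freely with $M_\xi\cap X\in S$ and then simply extracts a $\kappa$-sized subfamily with $\sup(M_\xi\cap\kappa)<\eta$ and $M_\xi\cap\xi=M_\eta\cap\eta$, which amounts to the pigeonhole on countable subsets of $\beta^*+1$ that you reject; in the intended application ($\kappa=\omega_2$ under $CH$/$GCH$) that count does stay below $\kappa$, so the paper's route is sound where it is used. Your reduction to a single pair (transporting the generic sequence by $h$ rather than freezing all the sequences $\langle p_\xi^n\rangle$ across the family, as the paper does) is a genuine small simplification and is fine. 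To repair your proof, either revert to the paper's counting step (adding the cardinal-arithmetic hypothesis it silently needs, or restricting to the case actually used), or give a construction of the $N_\nu$ that provably keeps the traces in $S$ while forcing the overlap coincidences --- the sketch as written does neither.
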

\begin{proof}
Let $\langle p_\xi: \xi \in \kappa \rangle $ be a collection of conditions in $\mathcal{P}$,
and for each $\xi \in \kappa$, $M_\xi$ be a suitable model for $\mathcal{P}$ such that  $M\cap X \in S$,
$\kappa ,\xi $, and $\langle p_\xi: \xi \in \kappa \rangle $ are in $ M$. 
Consider the function $f: \kappa \longrightarrow \kappa$ defined by $\xi \mapsto \sup(M_\xi \cap \xi)$. 
Obviously for all $\xi$ with $\mathrm{cf}(\xi)>\omega$, $f(\xi)<\xi.$ So there is a stationary $W\subset \kappa$ such 
that the function $f\upharpoonright W$ is a constant. Fix $U \subset W$ of size $\kappa$ such that 
for all $\xi < \eta$ in $U$, $\sup(M_\xi \cap \kappa)< \eta$ and $M_\xi \cap \xi = M_\eta \cap \eta$.

For each $\xi \in U$, let
$\langle p_\xi^n: n \in \omega \rangle $ be descending and $(M_\xi,\mathcal{P})$-generic
 with $p_\xi^0=p_\xi$.
Since $2^{\aleph_0} < \kappa$ we can thin down $U$ if necessary so that
for all $\xi, \eta$ in $U$, $M_\xi$ is isomorphic to $M_\eta$ via the map
$h_{\xi \eta}:M_\xi \rightarrow M_\eta$ which is
induced by the transitive collapse maps. 

Now consider models $M_\xi$ together with $\langle p_\xi^n: n \in \omega \rangle $ as constants.
There are at most continuum many of the isomorphism types of these models and by extensionality the 
isomorphism between $M_\xi$ and $M_\eta$ is unique if it exists.
So we can thin down the collection  $\langle p_\xi: \xi \in U \rangle $ again,
so that
for all $\xi, \eta$ and $n \in \omega$, $h_{\xi \eta}(p_\xi^n)=p_\eta^n $.

Since $\mathcal{P}$ satisfies $S$-cic, for every pair of distinct $\xi, \eta$ in $U$, 
there is a condition $q \in \mathcal{P}$ which is a common lower bound for sequences 
$\langle p_\xi^n: n \in \omega \rangle $ and $\langle p_\eta^n: n \in \omega \rangle $.
Hence  $p_\xi$ and $p_\eta$ are compatible.
\end{proof}

We are now ready to state and prove the lemma we need for the chain condition issues. 

\begin{lem} \label{chain CS}
Suppose 
$\langle \mathcal{P}_i, \dot{\mathcal{Q}}_j: i\leq \delta, j < \delta \rangle$
is a countable support iteration of $S$-complete forcings, where $S\subset [X]^{\omega}$
is stationary and $X$ is uncountable. Assume in addition that $\kappa$ is a regular cardinal and for all $i \in \delta$,
$\Vdash_{\mathcal{P}_i} ``\dot{\mathcal{Q}}$ has the $\check{S}$-cic for $\kappa$''.
Then $\mathcal{P}_\delta$ has the $S$-cic for $\kappa$.
\end{lem}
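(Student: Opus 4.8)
The plan is to prove Lemma~\ref{chain CS} by induction on $\delta$, exactly paralleling the proof of Lemma~\ref{iteration} (the iteration lemma for $S$-completeness), but now tracking two suitable models $M,N$ and an isomorphism $h$ between them rather than a single generic sequence. The setup gives us $M,N$ suitable for $\mathcal{P}_\delta$ with $M\cap X,N\cap X\in S$, an isomorphism $h:M\to N$ fixing $M\cap N$, ordinals $\alpha_M,\alpha_N\in\kappa$ with $h(\alpha_M)=\alpha_N$, $\sup(M\cap\kappa)<\alpha_N$, $M\cap\alpha_M=N\cap\alpha_N$, and an $(M,\mathcal{P}_\delta)$-generic sequence $\langle p_n:n\in\omega\rangle$. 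I must produce a common lower bound for $\langle p_n\rangle$ and $\langle h(p_n)\rangle$. Because $h$ is an isomorphism and $\langle p_n\rangle\in M$ in the relevant sense, $\langle h(p_n):n\in\omega\rangle$ will be an $(N,\mathcal{P}_\delta)$-generic sequence.

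For the \textbf{successor case}, say $\delta=\gamma+1$, I would apply the inductive hypothesis at stage $\gamma$ to get a common lower bound $q$ for the restrictions $\langle p_n\upharpoonright\gamma\rangle$ and $\langle h(p_n)\upharpoonright\gamma\rangle$. Then $q$ forces that both $\langle \dot{q}_n\rangle$ and $\langle h(\dot{q}_n)\rangle$ (the $\dot{\mathcal{Q}}_\gamma$-coordinates) are generic sequences for the images of $M,N$ in the extension, and that the isomorphism $h$ lifts appropriately; here one invokes the hypothesis that $\dot{\mathcal{Q}}_\gamma$ has the $\check{S}$-cic for $\kappa$ in $\mathbf{V}^{\mathcal{P}_\gamma}$, so below $q$ there is a common lower bound $\dot{q}$ for the two tail sequences, and $q\concat\dot{q}$ is the desired condition. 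The key points to verify are that $q$ forces $M[\dot G]$ and $N[\dot G]$ to remain suitable with $M[\dot G]\cap X=M\cap X\in S$ and likewise for $N$, and that the relevant ordinal configuration $(\alpha_M,\alpha_N)$ survives into the extension so the $S$-cic hypothesis applies.

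For the \textbf{limit case}, I would fix a sequence $\langle\alpha_i:i\in\omega\rangle$ cofinal in $M\cap\delta$ with each $\alpha_i\in M$, noting that $h(\alpha_i)\in N$ gives a corresponding cofinal sequence in $N\cap\delta$. Using the inductive hypothesis at each $\mathcal{P}_{\alpha_i}$ together with Lemma~\ref{iteration} to keep the lower bounds coherent, I would build an increasing-in-coordinate, descending-in-strength sequence of conditions $q_i\in\mathcal{P}_{\alpha_i}$ such that each $q_i$ is a common lower bound for $\langle p_n\upharpoonright\alpha_i\rangle$ and $\langle h(p_n)\upharpoonright\alpha_i\rangle$ and $i<j\Rightarrow q_j\upharpoonright\alpha_i=q_i$; then $q'=\bigcup_i q_i$ is the common lower bound, its support contained in $M\cap\delta$ (hence countable) guaranteeing $q'\in\mathcal{P}_\delta$.

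The \textbf{main obstacle} I expect is bookkeeping the isomorphism $h$ coherently through the iteration: I must ensure that at each intermediate stage the restricted map $h\upharpoonright\mathcal{P}_{\alpha_i}$ is again an isomorphism between the restricted models satisfying the full list of $S$-cic hypotheses, in particular the delicate ordinal conditions $\sup(M\cap\kappa)<\alpha_N$ and $M\cap\alpha_M=N\cap\alpha_N$ relating $\kappa$ to the models. These conditions concern $\kappa$ rather than $\delta$, so they are not literally restricted along the iteration, but I must check they continue to witness the $S$-cic hypothesis at each stage---and in the limit, that the passage to $M[\dot G_{\alpha_i}]$ does not disturb them. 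A secondary subtlety is verifying that $\langle h(p_n)\rangle$ is genuinely $(N,\mathcal{P}_\delta)$-generic, which follows since $h$ is elementary and maps the dense sets met by $\langle p_n\rangle$ inside $M$ to the dense sets inside $N$.
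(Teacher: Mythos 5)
Your overall strategy (induction on $\delta$, successor step via the $\check S$-cic of the iterand in $\mathbf{V}^{\mathcal{P}_\gamma}$, limit step by fusing along a cofinal sequence) is the same as the paper's, and your successor step matches the paper's warm-up argument for $\mathcal{P}*\dot{\mathcal{Q}}$. But the limit case as you describe it has a genuine gap, and it is exactly the point where the paper has to strengthen the induction hypothesis. You propose to build $q_i\in\mathcal{P}_{\alpha_i}$ with $\alpha_i\in M\cap\delta$, each a common lower bound for $\langle p_n\upharpoonright\alpha_i\rangle$ and $\langle h(p_n)\upharpoonright\alpha_i\rangle$, and to take $q'=\bigcup_i q_i$ with support contained in $M\cap\delta$. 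The problem is that $h$ moves coordinates: $h$ carries $M\cap[\alpha_M,\delta)$ into $N\cap[\alpha_N,\delta)$, and $\alpha_N>\sup(M\cap\kappa)\geq\sup(M\cap\delta)$. So whenever some $p_n$ has support meeting $[\alpha_M,\delta)$, the condition $h(p_n)$ has support containing ordinals of $N\cap\delta$ lying strictly above $\sup_i\alpha_i=\sup(M\cap\delta)$. A condition whose support is contained in $M\cap\delta$ is trivial at those coordinates and therefore cannot lie below $h(p_n)$; your $q'$ is not a lower bound for $\langle h(p_n):n\in\omega\rangle$. Relatedly, what the induction hypothesis naturally hands you at stage $\alpha_i$ is a bound for $\langle h(p_n\upharpoonright\alpha_i)\rangle=\langle h(p_n)\upharpoonright h(\alpha_i)\rangle$, which lives in $\mathcal{P}_{h(\alpha_i)}$ rather than $\mathcal{P}_{\alpha_i}$; the two restrictions you write down are not each other's $h$-images, so the $S$-cic at stage $\alpha_i$ does not directly apply to them.

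The paper's remedy is to induct on a stronger statement $(*)$: instead of a single common lower bound at each intermediate stage, carry a \emph{pair} of conditions $r\in\mathcal{P}_{\alpha}$ and $r_h\in\mathcal{P}_{h(\alpha)}$ bounding $\langle p_n\upharpoonright\alpha\rangle$ and $\langle h(p_n\upharpoonright\alpha)\rangle$ separately, subject to $\operatorname{supp}(r)\subset M$, $\operatorname{supp}(r_h)\subset N$, and $r(\xi)=r_h(\xi)$ for all $\xi\in M\cap N$. At limits one runs your fusion simultaneously along $\langle\alpha_i\rangle$ cofinal in $M\cap\delta$ and along its $h$-image, which is cofinal in $N\cap\delta$, and only at the very end glues the two resulting conditions into a single common lower bound --- possible precisely because their supports agree on $M\cap N$ and their union is countable. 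You should reformulate your induction hypothesis along these lines; as stated, the limit step does not go through.
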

\begin{proof}
First note that if $\mathcal{P}$ is any forcing, $M,N$ are suitable for $\mathcal{P}$, 
$h:M\rightarrow N $ is an isomorphism,
$p$ is both $(M, \mathcal{P})$-generic and $(N, \mathcal{P})$-generic,
 and $G\subset \mathcal{P}$ is $\mathbf{V}$-generic
with $p\in G$, then $h[G]:M[G] \rightarrow N[G]$ defined by $\tau_G \mapsto (h(\tau))_G$
is an isomorphism as well. 

Before we deal with the general case,
we prove the lemma for $\mathcal{P}*\dot{\mathcal{Q}}$. Let $M,N,h$ be as in Definition \ref{S-cic}
for $\mathcal{P}*\dot{\mathcal{Q}}$, and let
$\langle p_n*\dot{q}_n : n \in \omega \rangle$ be a descending $(M,\mathcal{P}*\dot{\mathcal{Q}} )$-generic
such that the sequences $\langle p_n: n\in \omega \rangle $ and $\langle h(p_n): n\in \omega \rangle $
have a common lower bound $p\in \mathcal{P}$. 
Since $p$ is both $(M, \mathcal{P})$-generic and $(N, \mathcal{P})$-generic,
it forces the hypotheses of the Definition \ref{S-cic} for 
$M[\dot{G_\mathcal{P}}],N[\dot{G_\mathcal{P}}], \dot{\mathcal{Q}}, h[\dot{G_\mathcal{P}}]$, and
$\langle \dot{q}_n : n \in \omega \rangle$. By the assumption on $\dot{\mathcal{Q}}$,
there is a $\mathcal{P}$-name $\dot{q}$ which is forced by $p$ to be a common lower bound for 
$\langle \dot{q}_n : n \in \omega \rangle$ and 
$\langle h[\dot{G_\mathcal{P}}](\dot{q}_n) : n \in \omega \rangle$. So $p*\dot{q}$ is a common lower bound 
for $\langle p_n*\dot{q}_n : n \in \omega \rangle$ and its image under $h.$

Now let $(*)$ be the assertion that the hypothesis below imply the conclusion below.

Hypothesis : 
\begin{enumerate}
 \item $\alpha<\beta \leq \delta$,
 \item $M,N,h,$ and $\langle p_n: n\in \omega \rangle $, are as in 
Definition \ref{S-cic} for $\mathcal{P}=\mathcal{P}_\delta$, with $\alpha \in M$,
\item $r \in \mathcal{P}_\alpha$ and $r_h \in \mathcal{P}_{h(\alpha)}$ are lower bounds for 
$\langle p_n \upharpoonright \alpha: n\in \omega \rangle $ and 
$\langle h(p_n \upharpoonright \alpha): n\in \omega \rangle $, respectively such that:
\begin{enumerate}
\item
$\mathrm{supp}(r) \subset M$, and $\mathrm{supp}(r_h) \subset N$, and
\item
$r(\xi)=r_h(\xi)$ for all $\xi$ in $M\cap N$.
\end{enumerate}
\end{enumerate}

Conclusion: There are lower bounds $\bar{r} \in \mathcal{P}_\beta$, $\bar{r}_h \in \mathcal{P}_{h(\beta)}$
for $\langle p_n \upharpoonright \beta: n\in \omega \rangle $ and $\langle h(p_n \upharpoonright \beta): n\in \omega \rangle $ 
respectively such that:
\begin{enumerate}
\item
$\mathrm{supp}(\bar{r}) \subset M$, and $\mathrm{supp}(\bar{r}_h) \subset N$,
\item
$\bar{r}(\xi)=\bar{r}_h(\xi)$ for all $\xi$ in $M\cap N$, and
\item
$\bar{r}\upharpoonright \alpha = r$ and $\bar{r}_h\upharpoonright h(\alpha)=r_h$.
\end{enumerate}

First note that $(*)$ is stronger than the lemma. To see this put $\alpha = 0$ and $\beta = \delta$, then any lower bound for 
$\bar{r}$ and $\bar{r_h}$ works as the desired condition.

We proceed by induction on $\beta$ to show $(*)$. The successor step is trivial by what we just proved.
If $\beta$ is a limit ordinal let
$\langle \alpha_i: i \in \omega \rangle$  be an increasing cofinal sequence in  
$M\cap \delta$ with $\alpha_0 = \alpha$. Then
$\langle h(\alpha_i) : i \in \omega \rangle$ is cofinal in $N\cap \delta$.
Let $\langle r^i : i \in \omega  \rangle$ and $\langle r_h^i : i \in \omega \rangle$ be sequences of conditions such that the following hold.
\begin{enumerate}
\item $r^i \in \mathcal{P}_{\alpha_i}$ is a lower bound for $\langle p_n \upharpoonright \alpha_i : n \in \omega \rangle$ and 
$r_h^i \in \mathcal{P}_{h(\alpha_i)}$ is a lower bound for  $\langle h(p_n) \upharpoonright h( \alpha_i) : n \in \omega \rangle$.
\item $\mathrm{supp}(r^i) = M\cap \alpha_i    $ and $\mathrm{supp}( r_h^i) = N \cap h(\alpha_i)  $.
\item If $\xi \in M\cap N$ then $r^i (\xi) = r_h^i (\xi)$.
\item If $i< j$ then $r^i = r^j \upharpoonright \alpha_i$ and $r_h^i = r_h^j \upharpoonright h(\alpha_i) .$
\end{enumerate}
Now $\bar{r} = \bigcup_{i \in \omega} r^i$ and $\bar{r}_h = \bigcup_{i \in \omega} r_h^i$ are as desired.
\end{proof}

We finish this section with a few remarks.
Unlike Lemma 2.4 of chapter VIII of \cite{proper_forcing}, in the last lemma there is no hypothesis on the 
length of the 
iteration. In other words by the lemmas in this section, as long as $\kappa$ is 
regular and greater than
the continuum, any countable support iteration of posets that have the $S$-cic for $\kappa$ has the 
$\kappa$-cc.

It is possible to define $S$-proper posets to be the ones which have $M$-generic condition $q$ below $p$,
whenever $M$ comes from a stationary set $S$, and $p$ is a condition inside $M$. These posets inherit 
many nice properties of proper posets. For instance, they preserve stationarity of all stationary subsets of 
$S$, and their countable support iterations do not add new cofinal branches to $\omega_1$-trees provided that the 
iterands have this property. 
$S$-properness is weaker than both $S$-completeness and properness.

\section{Minimal Kurepa Trees} \label{embedding forcing}

This section is devoted to the proof of Theorem \ref{main}. A fastness notion for closed unbounded 
subsets of $\omega_2$ is used in the definition of the forcings which add embeddings. 

\begin{defn}
A club $C_U \subset \omega_2$ is fast enough for $U$ and $T$ if it is the set of all 
$\sup(M_\xi \cap \omega_2)$ where $\langle M_\xi : \xi \in \omega_2 \rangle $
 is a continuous $\in$-chain
of $\aleph_1$-sized elementary submodels of $H_\theta$ 
such that:
\begin{itemize}
\item $U,T$ are in $M_0$,
\item  for all $\xi \in \omega_1$, $\xi \cup \omega_1 \subset M_\xi$, and 
\item for all $\xi \in \omega_1$,  $\langle M_\eta : \eta \leq \xi \rangle$ is in $M_{\xi + 1}$.
\end{itemize}
Let $\phi$ be  a partial function from $\omega_2$ to $\omega_2$ and $C \subset \omega_2$. 
We say $\phi$ respects $C$, whenever   $\xi < \alpha$ if and only if $\phi(\xi) < \alpha$, 
for all $\xi \in \dom(\phi)$ and  $\alpha \in C$.
\end{defn}

\begin{defn} \label{forcing definition}
Suppose $T$ is an everywhere Kurepa tree with 
$\mathcal{B}(T)=\langle b_\xi ; \xi \in \omega_2 \rangle$,
$U$ a downward closed everywhere Kurepa subtree of $T$, and
$C_U\subset \omega_2$ a club that is fast enough. 
The poset $\mathcal{Q}_{T,U}$ is the set of all conditions $p=(f_p,\phi_p)$ such that the following hold.
\begin{enumerate}
\item
$f_p:T\upharpoonright A_p \longrightarrow U \upharpoonright A_p$ is a level preserving tree isomorphism,
where $A_p \subset \omega_1$ is countable and closed with $\max A_p = \alpha_p$.
\item
$\phi_p$ is a countable partial injection from $\omega_2$ to $\omega_2$ such that:
\begin{enumerate}
\item for all $\xi \in \dom(\phi_p)$, $b_{\phi_p(\xi)} \in \mathcal{B}(U)$,
\item $\phi_p$ respects $C_U$.
 \end{enumerate}
\item
If $\xi \in \dom(\phi_p)$ then $f_p(b_\xi (\alpha_p))= b_{\phi_p(\xi)}(\alpha_p)$.
\end{enumerate}
We let $p\leq q$ if $f_q \subset f_p$, $f_p \upharpoonright T_{\leq \alpha_q} = f_q$ and $\phi_q \subset \phi_p$. 
Whenever there is no ambiguity we use $\mathcal{Q}$ instead of $\mathcal{Q}_{T,U}$.
\end{defn}

For more clarity we include the density argument we need.
For all $\alpha \in \omega_1$, the set of all 
conditions $q$ with $\alpha_q \geq \alpha $ is dense in $\mathcal{Q}$. 
In order to extend an arbitrary $r \in \mathcal{Q}$ to a condition $q$ with $\alpha_q > \alpha$,
let $\beta \in \omega_1$ be greater than $\alpha$ such that for all $t \in T_{\alpha_r}$ the set of all
$s > t $ with $ s \in T_\beta \setminus \{b_\xi (\beta): \xi \in \dom(\phi_r)\}$ is infinite and 
for all $u \in U_{\alpha_r}$ the set of all 
$v > u$ with $v \in U_\beta \setminus \{b_\eta (\beta) : \eta \in \range(\phi_r)\}$ is infinite.
Define $A_q = A_r \cup \{\beta\}$ and $\phi_q = \phi_r$.  
For each $t \in T_{\alpha_r}$, choose $g_t$ a bijection from the set
$\{s \in T_\beta: s > t \textrm{ and } s \notin \bigcup_{\xi \in \dom(\phi_r)} b_\xi \}$ to 
$\{v \in U_\beta: v > f_r(t) \textrm{ and } v \notin \bigcup_{\eta \in \range(\phi_r)} b_\eta \} $.
Now let $f_q = f_r \cup \bigcup_{t \in T_{\alpha_r}}g_t$ works.

Moreover, for all $\xi \in \omega_2$ the set $D_\xi$ consisting of all conditions $q$ with 
$\xi \in \dom(\phi_q) $ is dense in $\mathcal{P}$.
In order to see that, let
$p \in \mathcal{Q}$, and $ \xi \in \omega_2 \setminus \dom(\phi_p)$.
Also let $\langle M_\xi : \xi \in \omega_2 \rangle $ and $C_U$  be as above.
Note that by continuity, $ \min \{ \zeta : \xi \in M_\zeta \}$ has to be a successor ordinal so let
$\rho \in \omega_2$ such that
$\rho +1= \min \{ \zeta : \xi \in M_\zeta \}$. 
Recall that $U$ is everywhere Kurepa.
So there exists $\eta \in \omega_2$ such that
$f_p(b_\xi(\alpha_p)) = b_\eta(\alpha_p)$,
$\eta \notin \range(\phi_p)$,
$\rho +1= \min \{ \zeta : \eta \in M_\zeta \}$, and 
$b_\eta \in \mathcal{B}(U)$.
Now the condition defined by  $f_q = f_p$ and $\phi_q = \phi_p \cup \{(\xi, \eta)\}$ works. 
Similarly, for all $\xi$ with $b_\xi \in \mathcal{B}(U)$, the set of all conditions $q$ with $\xi \in \range(\phi_q)$ is dense. 

\begin{lem} \label{nice forcing}
Suppose $T$ is an everywhere Kurepa tree with $\Omega(T)=S$ stationary, $\mathcal{P}$ is an 
$S$-complete forcing, and $\dot{U}$ is a $\mathcal{P}$-name for a
downward closed everywhere Kurepa  subtree of $T$. Then 
\begin{itemize}
\item[1)]$\Vdash_{\mathcal{P}} ``\dot{\mathcal{Q}_{T,U}} $ is $\check{S}$-complete'', and 
\item[2)]$\Vdash_{\mathcal{P}} ``\dot{\mathcal{Q}_{T,U}} $ has the $\check{S}$-cic for $\check{\omega_2}$''.
\end{itemize}

\end{lem}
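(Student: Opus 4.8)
The plan is to prove both clauses inside $\mathbf{V}^{\mathcal{P}}$. Since $\mathcal{P}$ is $S$-complete it adds no new countable sequences of ordinals, so $T$ remains an everywhere Kurepa $\omega_1$-tree, $\Omega(T)$ is unchanged, and $U$ is a genuine downward closed everywhere Kurepa subtree with $\mathcal{B}(U)\subseteq\mathcal{B}(T)$; hence it suffices to show that $\mathcal{Q}=\mathcal{Q}_{T,U}$ is $S$-complete and has the $S$-cic for $\omega_2$, with $X=\mathcal{B}(T)$ and $S=\Omega(T)$. Everything rests on two combinatorial facts about a suitable $M$ with $Z:=M\cap\mathcal{B}(T)\in\Omega(T)$ and $\delta:=M\cap\omega_1$. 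First, everywhere Kurepa-ness gives, above any node of $T\upharpoonright\delta$, two branches in $M$ splitting arbitrarily high below $\delta$, so $\alpha_Z=\sup\{b\Delta b':b,b'\in Z\}=\delta$; combined with $Z\in\Omega(T)$ this yields the covering property that every $t\in T_\delta$ lies on some $b_\xi$, $\xi\in M\cap\omega_2$. Second, two distinct branches of $M$ through one node at level $\delta$ would split above $\delta$, yet $b\Delta b'\in M\cap\omega_1=\delta$ whenever $b,b'\in M$; so each $t\in T_\delta$ lies on exactly one branch from $Z$. Both facts transfer to $U$: if $b_\eta\in M$ passes through $s\in U_\delta$ and $b_\eta\notin\mathcal{B}(U)$, the least level at which $b_\eta$ leaves $U$ is a countable ordinal definable from $b_\eta,U\in M$, hence below $\delta$, contradicting $s=b_\eta(\delta)\in U$ and downward closure; thus $b_\eta\in\mathcal{B}(U)$, and covering and uniqueness hold for $U_\delta$ as well.

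For clause (1), let $\langle p_n\rangle$ be $(M,\mathcal{Q})$-generic and set $f=\bigcup_n f_{p_n}$, $\phi=\bigcup_n\phi_{p_n}$, $A=\bigcup_n A_{p_n}$. Density of the sets ``$\alpha_q\geq\alpha$'', ``$b_\xi\in\dom\phi_q$'', and ``$b_\zeta\in\range\phi_q$'' gives $\sup A=\delta$, $\dom\phi=M\cap\omega_2$, and $\range\phi=\{\zeta\in M\cap\omega_2:b_\zeta\in\mathcal{B}(U)\}$. I then extend $f$ to level $\delta$ by the only admissible choice: for $t=b_\xi(\delta)$, where $\xi\in\dom\phi$ is the unique index given by covering and uniqueness, put $f_\delta(t)=b_{\phi(\xi)}(\delta)$. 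Covering and uniqueness for $T$ make $f_\delta$ total on $T_\delta$, uniqueness for $U$ makes it injective, covering for $U_\delta$ makes it onto $U_\delta$, and it agrees with $f$ below $\delta$, so $\bar f=f\cup f_\delta$ is a level-preserving tree isomorphism on $T\upharpoonright(A\cup\{\delta\})$. Uniqueness also yields clause 3 at level $\delta$ (at most one branch of $Z$ per node), while clause 4 holds by construction, so $(\bar f,\phi)$ is a lower bound.

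For clause (2), the hypotheses force $M\cap N\cap\omega_2=M\cap\alpha_M=N\cap\alpha_N$, $M\cap\omega_1=N\cap\omega_1=\delta$ with $h\upharpoonright\delta=\mathrm{id}$, and $h$ fixing $T,U$ and the branch enumeration. Hence $h(f_{p_n})=f_{p_n}$, since all nodes involved have height $<\delta$ and so lie in $M\cap N$, whereas $h(\phi_{p_n})$ is the $h$-translate of $\phi_{p_n}$; in particular $\langle h(p_n)\rangle$ is $(N,\mathcal{Q})$-generic with the same $f$-part below $\delta$. Running clause (1) for both sequences, and using that the level-$\delta$ extension of the shared map $f$ is \emph{unique}, produces a single $f_\delta$ serving both $M$ and $N$. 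My candidate common lower bound is $q=(\bar f,\bar\phi\cup\bar\psi)$ with $\bar\phi=\bigcup_n\phi_{p_n}$ (domain $M\cap\omega_2$) and $\bar\psi=\bigcup_n h(\phi_{p_n})$ (domain $N\cap\omega_2$). Clauses 2.a and 2.b are inherited, clause 3 holds because at most one branch from $M$ and one from $N$ pass through any $t\in T_\delta$, and clause 4 holds for indices from either side because $f_\delta=f_\delta^M=f_\delta^N$.

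The one nontrivial point — and the main obstacle — is that $\bar\phi\cup\bar\psi$ is a single injection, i.e.\ that $\bar\phi$ and $\bar\psi$ agree on the overlap $M\cap N\cap\omega_2=M\cap\alpha_M$. Since the fixed points of $h$ are exactly $M\cap N$, agreement at $\xi\in M\cap\alpha_M$ amounts to $\bar\phi(\xi)<\alpha_M$. This is precisely where clause 2.b (respecting $C_U$) and the fastness of $C_U$ enter: the selected branch index $\bar\phi(\xi)$ is confined by 2.b to the same $C_U$-block as $\xi$, and the fastness of $C_U$ is what guarantees a club point separating $\xi<\alpha_M$ from $\alpha_M$ (so that one may take $\alpha_M,\alpha_N$ at points of $C_U$), forcing $\bar\phi(\xi)<\alpha_M$. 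Injectivity of the glued map then follows from injectivity of $\bar\phi$ and $\bar\psi$ and the fact that any shared value indexes a branch of $U$ in $M\cap N$. I expect this reconciliation of the $M$-side and $N$-side branch assignments through the club-respecting clause to be the delicate heart of the argument; the limit-stage bookkeeping for $q$ itself parallels Lemma \ref{iteration}.
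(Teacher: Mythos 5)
Your proposal is correct and follows essentially the same route as the paper: for (1) you take unions and extend $f$ to level $\delta$ by $f(b_\xi(\delta))=b_{\phi(\xi)}(\delta)$, using $M\cap\mathcal{B}(T)\in\Omega(T)$ to cover $T_\delta$, and for (2) you glue $\bigcup_n\phi_n$ with $\bigcup_n h(\phi_n)$ and use clause 2.b together with the club $C_U$ to separate the two ranges and reconcile them on $M\cap N$. If anything, you spell out the well-definedness points (uniqueness of the branch through each node of $T_\delta$, transfer of covering to $U$, and agreement of $\bar\phi$ and $\bar\psi$ on $M\cap\alpha_M$) in more detail than the paper does.
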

\begin{proof}
Let $G\subset \mathcal{P}$ be a $\mathbf{V}$-generic filter. Note that by Lemma \ref{No New branch},
since $\mathcal{P}$ is $S$-complete it does not add new cofinal
branches to $\omega_1$-trees. Moreover, $S \subset [\mathcal{B}(T)]^\omega$ is stationary in $\mathbf{V}[G]$ 
because $\mathcal{P}$ preserves the stationarity of stationary subsets of $S$.

Now we work in $\mathbf{V}[G]$. To see (1) assume $M$ is suitable for
 $\dot{\mathcal{Q}}_G=\mathcal{Q}$, 
and $M \cap \mathcal{B}(T) \in S$. Also let
$\langle p_n = (f_n,\phi_n): n\in \omega \rangle$ 
be a descending
 $(M,\mathcal{Q})$-generic sequence, and $\delta = M\cap \omega_1$.
 Now by a density argument and elementarity
 \begin{itemize}
 \item
 $\bigcup_{n \in \omega} \dom(\phi_n)=M\cap \omega_2$, and
 \item
 $\bigcup_{n\in \omega} \dom(f_n)=T\upharpoonright A,$ for some $A$ which is cofinal in $\delta$.  
 \end{itemize}
Let  $\phi_p = \bigcup_{n \in \omega}\phi_n $,
$A_p = \{ \delta \} \cup \bigcup_{n \in \omega} A_{p_n}$,
$f_p \upharpoonright (T_{< \delta})= \bigcup_{n \in \omega} f_n$, 
and for each $\xi \in M\cap \omega_2$ define $f_p(b_\xi(\delta))=b_{\phi_p(\xi)}(\delta)$. 
We show that $p=(f_p, \phi_p)$ is a condition in the poset $\mathcal{Q}_{T,U}$.
Note that by elementarity and the fact that $U$ is everywhere Kurepa, 
$\alpha_{\mathcal{B}(T)\cap M} = \alpha_{\mathcal{B}(U) \cap M} = \delta$.
First we show that $\dom(f_p)$ contains $T_\delta$.
Let $t \in T_\delta$. Since $M \cap \mathcal{B}(T) \in \Omega(T)$, there exists 
$b \in M \cap \mathcal{B}(T) $ such that $t \in b$. Let $\xi \in M$ be such that
$b_\xi =b$. Then by the density argument after Definition \ref{forcing definition},
there exists $n \in \omega$ such that $\xi \in \dom(\phi_n)$. So $t \in \dom(f_p)$.

In order to see $\range (f_p) \supset U_\delta$, let $u \in U_\delta$. 
Since $M \cap \mathcal{B}(T) \in \Omega(T)$ there exist $b \in \mathcal{B}(T) \cap M$ such that 
$u \in b$. Since $U$ is downward closed $M\vDash b \subset U$. So by elementarity 
$b \in \mathcal{B}(U) \cap M$. Let $\eta \in \omega_2$ be such that $b_\eta =b$.
Then by elementarity and the density statements after Definition \ref{forcing definition}, 
there exists $n \in \omega$ and $\xi \in \omega_2 \cap M$ such that $\phi_n(\xi)= \eta$. 
Therefore $f_p(b_\xi(\delta))= b_\eta(\delta)=b(\delta)=u$, as desired.
Other requirements of Definition \ref{forcing definition} trivially hold for $p$.
 
 For (2), still in $\mathbf{V}[G]$,  let $M,N,\langle p_n = (f_n, \phi_n): n\in \omega \rangle$,
 and $h$ be as in Definition \ref{S-cic}
with $M\cap \omega_1 = N\cap \omega_1 = \delta$ and  $\min((N\setminus M)\cap \kappa) > \sup(M\cap \kappa)$.
Also let  $\alpha_M= \min ((M\setminus N)\cap \kappa )$.
 We let $h(\phi_n)=\psi_n$ and since $h$ fixes the the elements of $M \cap N$, $h(f_n)=f_n$. Note that
 $b(\delta)=[h(b)](\delta)$, for all $b \in \mathcal{B}(T)\cap M$.
Let  $\phi_p = \bigcup_{n \in \omega}(\phi_n \cup \psi_n)$ and
 $f_p(b_\xi(\delta))= b_{\phi(\xi)}(\delta)$.

We need to show that $\phi_p$ is  one to one.
Obviously, if $\xi \neq \eta \neq h(\xi)$ then $\phi_p(\xi) \neq \phi_p(\eta)$.  
So  assume for a contradiction that $h(\xi) \neq \xi$ and  $\phi_p (\xi) = \phi_p (h(\xi))$. 
Fix $n \in \omega$ with $\xi \in \dom(\phi_n)$. Then $\phi_n(\xi)=\psi_n(h(\xi))=[h(\phi_n)](h(\xi))=h(\phi_n(\xi))$.
But $h$ fixes $\phi_n(\xi)$, so $\phi_n(\xi) \in M\cap N$. 
On the other hand  $\alpha_M \leq \xi$, since $h$ is an isomorphism which  fixes the elements of $M\cap N$.
Therefore 
\begin{center}
$\phi_n(\xi) < \sup (M\cap N \cap \omega_2)< \alpha_M \leq \xi $.
\end{center}
But $C_U \in M\cap N$ and $\sup(M\cap N \cap \omega_2) \in C_U$ which contradicts the fact that $\phi_n$ 
respects $C_U$.

Now as in the previous part, $p =(f_p ,\phi_p)$ is a common lower bound for 
$\langle p_n : n \in \omega \rangle$ and $\langle h(p_n) : n \in \omega \rangle$.
\end{proof}

\begin{rmk}
As mentioned in the proof of Lemma \ref{nice forcing}, in the second part we used the fact 
that the function $\phi_p$ of conditions $p$ in the forcing $\mathcal{Q}$ respect  a closed 
unbounded subset of $\omega_2$. Note that even one single instance of the forcing obtained by dropping this restriction 
collapses $\omega_2$. More precisely, the generic filter of such forcing adds a cofinal $\omega_1$-sequence to $\omega_2$ of the 
ground model.
\end{rmk}

The following proposition follows from \ref{chain CS}, \ref{iteration}, \ref{chain}, and \ref{nice forcing}.
\begin{prop}
Assume $\mathrm{GCH}$. If $T$ is an everywhere Kurepa tree with $\omega_2$ many cofinal branches such that
 $\Omega(T)\subset [\mathcal{B}(T)]^\omega$ is stationary,
then there is a forcing extension in which $\mathrm{GCH}$ is still  true and
$T$ is club isomorphic to all of its downward closed everywhere Kurepa subtrees.
\end{prop}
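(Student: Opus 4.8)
The plan is to assemble the machinery developed in Section \ref{Iteration section} into a single countable support iteration that simultaneously handles every downward closed everywhere Kurepa subtree. First I would fix, in the ground model satisfying $GCH$, the everywhere Kurepa tree $T$ with exactly $\omega_2$ many branches and $S = \Omega(T)$ stationary. The goal is to force with a countable support iteration of length $\omega_2$ in which, at each stage, we pick a name for some downward closed everywhere Kurepa subtree $\dot U$ and force with $\mathcal{Q}_{T,U}$ to add a club embedding of $T$ into $U$. A standard $GCH$-based bookkeeping argument (using that $|\mathcal{P}_\delta| \le \omega_2$ is maintained along the way, so there are only $\omega_2$ many nice names for subtrees) lets me enumerate all potential subtrees appearing in the final model and ensure each is eventually caught.

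The verification then splits into three packaged pieces, each supplied by an earlier result. By Lemma \ref{nice forcing}, each iterand $\mathcal{Q}_{T,U}$ is $\check S$-complete and has the $\check S$-cic for $\check\omega_2$ over its initial segment; by the Corollary following Lemma \ref{iteration}, the full iteration $\mathcal{P}_{\omega_2}$ is $S$-complete, hence by the Fact preceding Definition \ref{S-cic} it preserves $\omega_1$ and adds no new countable sequences; and by Lemma \ref{chain CS} together with Lemma \ref{chain}, since $2^{\aleph_0} < \omega_2$ and $\omega_2$ is regular, $\mathcal{P}_{\omega_2}$ has the $\omega_2$-c.c. The chain condition is what guarantees cardinals above $\omega_1$ are preserved and, combined with the $GCH$ premise and the no-new-reals property, that $GCH$ persists in the extension. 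I would also invoke Fact \ref{No A subtree} to note that $T$ retains the no-Aronszajn-subtree property, although that clause belongs to Theorem \ref{main} rather than to this proposition.

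The one genuinely delicate point is the bookkeeping and the preservation of the hypothesis \emph{throughout} the iteration. Concretely, I must verify that at each stage $\alpha$ the tree $T$ is still everywhere Kurepa with $\Omega(T)$ stationary in $\mathbf{V}^{\mathcal{P}_\alpha}$, so that Lemma \ref{nice forcing} applies to the chosen iterand. Since the iteration adds no new reals and no new branches to $\omega_1$-trees (being $S$-complete), $\mathcal{B}(T)$ and the everywhere-Kurepa structure are stable, and the stationarity of $S$ is preserved because $S$-complete forcings preserve stationarity of subsets of $S$. The second subtlety is ensuring that the branches added to the collapse are genuinely enough: the final model must contain a club embedding for every subtree, including those whose names only appear cofinally, which is precisely where the $\omega_2$-c.c. is used to show every such name is reflected into some proper initial segment $\mathcal{P}_\alpha$ and is therefore addressed at a later stage. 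I expect the careful confirmation that the everywhere-Kurepa and stationarity hypotheses survive at limit stages of uncountable cofinality, so that the recursion never breaks down, to be the main obstacle to writing this out in full rigor.
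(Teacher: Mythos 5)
Your proposal is correct and follows essentially the same route as the paper, which simply declares the proposition ``obvious'' from Lemmas \ref{nice forcing}, \ref{iteration}, \ref{chain}, and \ref{chain CS}; you have filled in exactly the standard countable support iteration of length $\omega_2$ with $GCH$ bookkeeping, the $\omega_2$-c.c.\ reflection of names, and the preservation of the everywhere-Kurepa and stationarity hypotheses at each stage that the author leaves implicit. Note that Lemma \ref{nice forcing} is already phrased for $\mathcal{P}$-names $\dot{U}$ over an arbitrary $S$-complete $\mathcal{P}$, which is precisely what disposes of the ``delicate point'' you flag about the hypotheses surviving to stage $\alpha$.
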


In order to prove Theorem \ref{main}, it suffices to find a model of 
$\mathrm{GCH}$ in which there is a Kurepa tree that 
satisfies the hypothesis of the proposition.

\begin{proof}[Proof of Theorem \ref{main}]
 Assume $\mathrm{CH}$.
Let $\mathcal{K}$ be the poset consisting of 
 conditions of the form $p=(T_p,b_p)$ where
\begin{itemize}
\item
$T_p$ is a countable tree of height $\alpha_p+1$ such that for all $t \in T_p$ there exists
 $s \in (T_p)_{\alpha_p}$ with $t<s$,
\item
$b_p$ is countable partial function from $\omega_2$ to the last level of $T_p$. 
\end{itemize}
 We let $p\leq q $ in $\mathcal{K}$ if 
\begin{itemize}
\item $(T_p)_{\leq \alpha_q} =T_q$,  
\item $\dom(b_p)\supset \dom(b_q)$, and
\item for all $\xi \in \dom(b_q)$,  $b_q(\xi) \leq b_p(\xi)$.
\end{itemize}

It is well known that $\mathcal{K}$ is countably closed and under $\mathrm{CH}$ it has the $\omega_2$-chain condition. 
If $T$ be the $\mathcal{K}$-generic tree  then
$\Omega(T)$ is stationary in $[\mathcal{B}(T)]^\omega$.
To see this, let $p$ be a condition and $\dot{E}$ be 
a $\mathcal{K}$-name 
for a club in $[\mathcal{B}(T)]^\omega$ which is forced by $p$ to be disjoint from $\Omega(T)$.
Let $M$ be suitable for $\mathcal{K}$ with $p, \dot{E}$, etc in $M.$
Then for any sequence $\langle p_n: n\in \omega \rangle $ which is $(M,\mathcal{K})$-generic
and $p_0\leq p$ we can form a lower bound $\bar{p}$ for the sequence such that
$\dom(\bar{p})=M\cap \omega_2$. Note that such a condition forces
that $M\cap \omega_2 =M[\dot{G}]\cap \omega_2$, where $\dot{G}$ is the canonical name for the 
$\mathcal{K}$-generic filter. On the other hand $\bar{p}$ forces that 
$M[\dot{G}]\cap \mathcal{B}(T)\in \dot{E}$, because it is $M$-generic.
So $\bar{p}$ forces that $M[\dot{G}]\cap  \mathcal{B}(T)\in \dot{E} \cap \Omega(T) $ 
which is a contradiction. 

In order to show that $T$ does not have any Aronszajn subtree in the final model, 
after embeddings are added, we will show 
$\Vdash_\mathcal{K} \dot{T}$ has no Aronszajn subtrees.
Note that this suffices by Lemma \ref{No A subtree}.
Let $\dot{U}$ be a $\mathcal{K}$-name for an uncountable downward closed subtree of $\dot{T}$,
where $\dot{T}$ is a $\mathcal{K}$-name for the tree $T$.
Let $M$ be a suitable model for $\mathcal{K}$ with $\dot{U}\in M$. By the assumptions,
for all $\xi \in \omega_2 \cap M$ and $p \in M\cap \mathcal{P}$ there is an extension
 $q \in M\cap \mathcal{P}$ of $p$ such that for some $\alpha \in \omega_1$,
 $q$ forces that $b_\xi(\alpha) \notin \dot{U}$.
  Here $b_\xi=\bigcup_{p\in \dot{G}}b_p(\xi)$
 and $\dot{G}$ is the canonical name for the generic filter of the forcing $\mathcal{K}$.
 By elementarity, $\alpha \in M\cap \omega_1$.
 Let $\langle p_n: n\in \omega \rangle $ be an $(M,\mathcal{K})$-generic sequence 
 such that for all $\xi \in \omega_2\cap M$ there is an $n \in \omega$ such that 
 $p_n \Vdash b_\xi (M\cap \omega_1) \notin \dot{U}$. Let $q$ be a lower bound for this sequence 
 such that $\dom(b_q)=M\cap \omega_2$. Then $q$ forces that $\dot{U}$ is countable,
 which is a contradiction.
Therefore, every downward closed subtree of $T$ contains $b_\xi$ 
 for some $\xi \in \omega_2$. Hence $T$ has no Aronszajn subtree and 
 $\langle b_\xi: \xi \in \omega_2 \rangle = \mathcal{B}(T)$.
 \end{proof}
 
 \section{Concluding remarks}

We finish this paper by a few remarks. Primarily, there are other Kurepa trees which can be made minimal 
by the method of this paper.
The Kurepa tree constructed in \cite{MR3611332} also satisfies the hypothesis of the last
proposition. So it can be made minimal in the same way as above. By the work in 
\cite{MR3611332}, this tree has no Aronszajn subtree. So by Lemma \ref{No A subtree}
this tree has no Aronszajn subtree after embeddings are added either.

The club minimality of an everywhere Kurepa tree clarifies the behavior of
 the invariant $\Omega$ introduced in \cite{no_real_Aronszajn}. 
%
We need the following definition which is due to Woodin.
\begin{defn}
Let $A,B$ be two collections of countable sets and $X=\bigcup A$,
$Y= \bigcup B$. We say $B \leq A$ 
if there is an injection $\iota : X \rightarrow Y $
such that for club many $M$ in $[Y]^{\omega}$, 
if $M \in B$ then $\iota ^{-1}M$ is in $ A$.
We let $B < A$ if $B\leq A$ but not $A\leq B$;
$A$ and $B$ are \emph{equivalent} ($A \equiv B$) if $A \leq B$ and $B \leq A$.
\end{defn}
\noindent
Assume $T$ is an $\omega_1$-tree which is equipped with a lexicographic order. Let $L$ be the linear order 
consisting of the elements of $T$ with the lexicographic order.
Then $\Omega(T)$ which we defined  is equivalent to $\Omega(L)$ defined in \cite{no_real_Aronszajn}. 
The invariant $\Omega$ played an essential role in the proof of the 
following results.
\begin{thm} \cite{no_real_Aronszajn}
Assume $\PFA^+$. If $L$ is a minimal non $\sigma$-scattered linear order, then it is either 
a real or Countryman type.
\end{thm}
\begin{thm}\cite{first}
If there is a supercompact cardinal then there is a forcing extension which satisfies $\mathrm{CH}$ in
which there is no minimal non $\sigma$-scattered linear order.
\end{thm}
The role of $\Omega$ in  \cite{no_real_Aronszajn} can be described as follows. 
First note that if $L_0 \subset L$ then $\Omega(L) \leq \Omega(L_0)$.
By the work in \cite{no_real_Aronszajn},  
$\Omega(L)$ contains a club 
iff 
$L$ is $\sigma$-scattered. 
Also for linear orders $L_0 \subset L$, $L$ does not embed in 
$L_0$ if $\Omega(L_0) > \Omega(L)$. 
Part of the work in
\cite{no_real_Aronszajn} and \cite{first} was to deduce from appropriate hypotheses that 
if $L$ is a non $\sigma$-scattered linear order that does not contain any real or Aronszajn 
type then there is $L_0\subset L$ such that $\Omega(L_0)$ does not contain a club and
$\Omega(L_0) > \Omega(L)$. Recall that a linear order is called a real type if it is isomorphic to an 
uncountable subset of the reals.
A linear order $L$ is said to be scattered if it has no copy of the rationals. 
It is called $\sigma$-scattered if it is a countable union of scattered suborders.

Note that if $L$ is a real type or Aronszajn type, for every $L_0 \subset L$ either $\Omega(L_0) \equiv \Omega(L)$ 
or else  $\Omega(L_0)$ contains a club.
One might ask if this characterizes all linear orders which are either real or Aronszajn types.
A consistent affirmative answer is given in \cite{no_real_Aronszajn} and \cite{first}. 
The existence of a club minimal 
Kurepa tree with no Aronszajn subtree  gives a consistent negative answer to this question.

\section*{Acknowledgments}
For continual support and encouragement, the author would like to thank Justin Tatch Moore.

The research presented in this paper was supported in part by NSF grants DMS-1262019 and DMS-1600635.

\def\Dbar{\leavevmode\lower.6ex\hbox to 0pt{\hskip-.23ex \accent"16\hss}D}


\begin{thebibliography}{1}

\bibitem{club_isomorphic}
U.~Abraham and S.~Shelah.
\newblock Isomorphism types of {A}ronszajn trees.
\newblock {\em Israel J. Math.}, 50(1-2):75--113, 1985.

\bibitem{no_real_Aronszajn}
T.~Ishiu and J.~T. Moore.
\newblock Minimality of non $\sigma$-scattered orders.
\newblock {\em Fund. Math.}, 205(1):29--44, 2009.

\bibitem{MR3611332}
B.~Kuzeljevic and S.~Todorcevic.
\newblock Forcing with matrices of countable elementary submodels.
\newblock {\em Proc. Amer. Math. Soc.}, 145(5):2211--2222, 2017.

\bibitem{first}
H.~Lamei~Ramandi and J.~T. Moore.
\newblock There may be no minimal non $\sigma$-scattered linear order.
\newblock Math. Res. Lett. accepted.

\bibitem{proper_forcing}
S.~Shelah.
\newblock {\em {P}roper and {I}mproper {F}orcing}.
\newblock Springer-Verlag, Berlin, second edition, 1998.

\end{thebibliography}
\end{document}